\newtheorem{theorem}{Theorem}[section]
\newtheorem{lemma}[theorem]{Lemma}
\newtheorem{corollary}[theorem]{Corollary}
\theoremstyle{definition}
\newtheorem{example}[theorem]{Example}
\newtheorem{remark}[theorem]{Remark}
\newcommand{\cR}{\mathcal{R}}
\newcommand{\cD}{\mathcal{D}}
\newcommand{\cL}{\mathcal{L}}
\newcommand{\cO}{\mathcal{O}}
\newcommand{\cA}{\mathcal{A}}
\newcommand{\cE}{\mathcal{E}}
\newcommand{\Cstar}{\mathbb{C}^*}
\newcommand{\CC}{\mathbb{C}}
\newcommand{\QQ}{\mathbb{Q}}
\newcommand{\ZZ}{\mathbb{Z}}
\newcommand{\PP}{\mathbb{P}}
\renewcommand{\AA}{\mathbb{A}}
\DeclareMathOperator{\tail}{tail}
\DeclareMathOperator{\lcm}{lcm}
\DeclareMathOperator{\im}{im}
\DeclareMathOperator{\coker}{coker}
\DeclareMathOperator{\tdiv}{T-Div}
\DeclareMathOperator{\cl}{Cl}
\DeclareMathOperator{\Cox}{Cox}
\DeclareMathOperator{\CoxSheaf}{\mathcal Cox}
\DeclareMathOperator{\Eff}{Eff}
\DeclareMathOperator{\Nef}{Nef}
\DeclareMathOperator{\Mov}{Mov}
\DeclareMathOperator{\spec}{Spec}
\DeclareMathOperator{\ord}{ord}
\DeclareMathOperator{\orb}{orb}
\DeclareMathOperator{\id}{id}
\DeclareMathOperator{\TV}{\mathbb{TV}}
\DeclareMathOperator{\coef}{coef}
\DeclareMathOperator{\conv}{conv}
\DeclareMathOperator{\cone}{cone}
\newcommand{\fan}{\mathcal{S}}
\newcommand{\ovl}{\overline}
\newcommand{\wt}{\widetilde}
\newcommand{\ttM}{\wt{\wt{M}}} 
\newcommand{\tM}{\wt{M}}       
\newcommand{\tN}{\wt{N}}       
\newcommand{\kA}{A}            
\newcommand{\kAA}{A^*}         
\newcommand{\kP}{{\mathcal P}}
\newcommand{\kZP}{\ZZ^{\kP}\hspace{-0.2em}/\Z}   
\newcommand{\kZZP}{(\ZZ^{\kP}\hspace{-0.2em}/\Z)^*}   
\newcommand{\kQP}{\QQ^{\kP}/\Q}   
\newcommand{\kQQP}{(\QQ^{\kP}/\Q)^*}   
\newcommand{\kV}{{\mathcal V}}
\newcommand{\kR}{{\mathcal R}}
\newcommand{\kVR}{{\kV\cup\kR}}
\newcommand{\kF}{\ZZ^{\kVR}}     
\newcommand{\kFF}{\ZZ^{(\kVR)*}}   
\newcommand{\kFQ}{\QQ^{\kVR}}    
\newcommand{\kpR}{\QQ^{\kR}_{\geq 0}}    
\newcommand{\kpVR}{\QQ^{\kVR}_{\geq 0}}    
\newcommand{\kQ}{Q}            
\newcommand{\kQQ}{Q^v}         
\newcommand{\kRR}{G}         
\newcommand{\ks}{s}            
\newcommand{\kss}{s^v}         
\newcommand{\kt}{t}            
\newcommand{\kL}{L}            
\newcommand{\kLL}{L^*}         
\newcommand{\kphi}{\varphi}    
\newcommand{\kphii}{\varphi^v} 
\newcommand{\pFan}{\mathcal{S}} 
\newcommand{\pDiv}{\mathcal{D}} 
\newcommand{\ktRR}{F}            
\renewcommand{\div}{\mbox{\rm div}}
\newcommand{\largestint}[1]{\lfloor #1 \rfloor}
\newcommand{\rounddown}[1]{\largestint{#1}}
\newcommand{\Z}{\ZZ}
\newcommand{\Q}{\QQ}
\DeclareMathOperator{\kDiv}{Div}
\newcommand{\surj}{\rightarrow\hspace{-0.8em}\rightarrow}
\newcommand{\ko}{\overline}
\newcommand{\gExt}{\mbox{\rm Ext}}
\newcommand{\gHom}{\mbox{\rm Hom}}
\newcommand{\gH}{\mbox{\rm H}}
\newcommand{\kG}{\Gamma}
\newcommand{\Tors}{\operatorname{Tors}}
\newcommand{\kst}{\,|\;}
\newcommand{\kk}{\CC}
\DeclareMathOperator{\Pol}{Pol}
\DeclareMathOperator{\Spec}{Spec}
\DeclareMathOperator{\CaDiv}{CaDiv}
\DeclareMathOperator{\Div}{Div}
\DeclareMathOperator{\Pic}{Pic}
\DeclareMathOperator{\Cl}{Cl}
\DeclareMathOperator{\innt}{int}
\newcommand{\CO}{{\mathcal O}}
\newcommand{\dual}{^{\scriptscriptstyle\vee}}
\DeclareMathOperator{\loc}{loc}
\renewcommand{\iff}{\Leftrightarrow}
\newcommand{\til}[1]{\widetilde{#1}}
\DeclareMathOperator{\codim}{codim}
\newcommand{\chQ}{/\hspace{-0.3em}/^{\mbox{\tiny ch}}} 
\newcommand{\keps}{\varepsilon}
\newcommand{\kbb}{{\scriptstyle \bullet}}
\newcommand{\rato}{-\hspace{-0.3em}\to}  
\DeclareMathOperator{\Grass}{Grass}
\newcommand{\T}{\mathbb T}
\newcommand{\V}{\mathbb V}
\newcommand{\toric}[1]{\T\V({#1})}  %
\renewcommand{\ttM}{\widehat{M}}
\newcommand{\dPfan}{%
\psset{xunit=.7cm,yunit=.5cm}%
\begin{pspicture}(-3.2,-3.2)(3.2,3.2)%

\psline{<->}(-3,-2)(3,-2)%
\psline{|-|}(-1.33,-2)(-1,-2)%
\uput*[270](-1.6,-2){\tiny{$-\frac{4}{3}$}}%
\uput*[270](-0.9,-2){\tiny{$-1$}}%
\uput*[270](2,-2.2){\tiny{$\fan_\infty$}}%

\psline{<-}(-3,2)(0.66,2)%
\psline{|->}(0.66,2)(3,2)%
\uput*[270](0.66,2){\tiny{$\frac{2}{3}$}}%
\uput*[270](2,1.8){\tiny{$\fan_0$}}%

\psline{<-}(-3,0)(0.5,0)%
\psline{|->}(0.5,0)(3,0)%
\uput*[270](0.5,0){\tiny{$\frac{1}{2}$}}%
\uput*[270](2,-0.2){\tiny{$\fan_1$}}%

\end{pspicture}}
\newcommand{\dPfanPlus}{%
\psset{xunit=.7cm,yunit=.5cm}%
\begin{pspicture}(-3.2,-3.2)(3.2,3.2)%

\psline{<-|}(-3,1)(0,1)%
\psline{->}(0,1)(3,1)%
\uput*[270](2,0.8){\tiny{$\tail \fan$}}%

\psline{<-|}(-3,-1)(-0.166,-1)%
\psline{|->}(0.166,-1)(3,-1)%
\psline[linestyle=dotted,dotsep=1pt]{-}(-0.166,-1)(0.166,-1)%
\uput*[270](-0.4,-1){\tiny{$-\frac{1}{6}$}}%
\uput*[270](0.3,-1){\tiny{$\frac{1}{6}$}}%
\uput*[270](2,-1.2){\tiny{$\deg \fan$}}%

\end{pspicture}}
\newcommand{\dPtorsionFan}{%
\psset{xunit=.7cm,yunit=.5cm}%
\begin{pspicture}(-3.2,-3.2)(3.2,3.2)%

\psline{<->}(-3,-2)(3,-2)%
\psline{|-|}(-1.5,-2)(-1,-2)%
\uput*[270](-1.8,-2){\tiny{$-\frac{3}{2}$}}%
\uput*[270](-0.9,-2){\tiny{$-1$}}%
\uput*[270](2,-2.2){\tiny{$\fan_\infty$}}%

\psline{<-}(-3,2)(0.66,2)%
\psline{|->}(0.66,2)(3,2)%
\uput*[270](0.66,2){\tiny{$\frac{2}{3}$}}%
\uput*[270](2,1.8){\tiny{$\fan_0$}}%

\psline{<-|}(-3,0)(0.66,0)%
\psline{|->}(0.66,0)(3,0)%
\uput*[270](0.66,0){\tiny{$\frac{2}{3}$}}%
\uput*[270](2,-0.2){\tiny{$\fan_1$}}%

\end{pspicture}}
\newcommand{\dPtorsionFanPlus}{%
\psset{xunit=.7cm,yunit=.5cm}%
\begin{pspicture}(-3.2,-3.2)(3.2,3.2)%

\psline{<-|}(-3,1)(0,1)%
\psline{->}(0,1)(3,1)%
\uput*[270](2,0.8){\tiny{$\tail \fan$}}%

\psline{<-|}(-3,-1)(-0.166,-1)%
\psline{|->}(0.33,-1)(3,-1)%
\psline[linestyle=dotted,dotsep=1pt]{-}(-0.166,-1)(0.33,-1)%
\uput*[270](-0.4,-1){\tiny{$-\frac{1}{6}$}}%
\uput*[270](0.5,-1){\tiny{$\frac{1}{3}$}}%
\uput*[270](2,-1.2){\tiny{$\deg \fan$}}%

\end{pspicture}}
\newcommand{\cotangfannullv}{%
\psset{xunit=0.5cm,yunit=.4cm}
\begin{pspicture}(-3.2,-3.2)(3.2,3.2)%
\pspolygon[linewidth=.001pt,linecolor=white,fillstyle=solid,fillcolor=gray1](0,3)(0,1)(2,3)%
\pspolygon[linewidth=.001pt,linecolor=white,fillstyle=solid,fillcolor=gray3](2,3)(0,1)(0,0)(3,0)%
\pspolygon[linewidth=.001pt,linecolor=white,fillstyle=solid,fillcolor=gray5](3,0)(0,0)(0,-3)%
\pspolygon[linewidth=.001pt,linecolor=white,fillstyle=solid,fillcolor=gray4](0,-3)(0,0)(-3,-3)%
\pspolygon[linewidth=.001pt,linecolor=white,fillstyle=solid,fillcolor=gray2](-3,-3)(0,0)(0,1)(-3,1)%
\pspolygon[linewidth=.001pt,linecolor=white,fillstyle=solid,fillcolor=gray6](-3,1)(0,1)(0,3)%
\psset{linewidth=1pt}%

\psline{-}(0,3)(0,1)(2,3)%
\psline{-}(2,3)(0,1)(0,0)(3,0)%
\psline{-}(3,0)(0,0)(0,-3)%
\psline{-}(0,-3)(0,0)(-2,-2)%
\psline{-}(-3, -3)(0,0)(0,1)(-3,1)%
\psline{-}(-3, 1)(0,1)(0,3)%
\end{pspicture}}
\newcommand{\cotangfaninftyv}{%
\psset{xunit=0.5cm,yunit=.4cm}
\begin{pspicture}(-3.2,-3.2)(3.2,3.2)%
\pspolygon[linewidth=.001pt,linecolor=white,fillstyle=solid,fillcolor=gray1](0,3)(0,0)(3,3)%
\pspolygon[linewidth=.001pt,linecolor=white,fillstyle=solid,fillcolor=gray3](3,3)(0,0)(3,0)%
\pspolygon[linewidth=.001pt,linecolor=white,fillstyle=solid,fillcolor=gray5](3,0)(0,0)(-1,-1)(-1,-3)%
\pspolygon[linewidth=.001pt,linecolor=white,fillstyle=solid,fillcolor=gray4](-1,-3)(-1,-1)(-3,-3)%
\pspolygon[linewidth=.001pt,linecolor=white,fillstyle=solid,fillcolor=gray2](-3,-3)(-1,-1)(-3,-1)%
\pspolygon[linewidth=.001pt,linecolor=white,fillstyle=solid,fillcolor=gray6](-3,-1)(-1,-1)(0,0)(0,3)%

\psset{linewidth=1pt}%
\psline{-}(0,3)(0,0)(3,3)%
\psline{-}(3,3)(0,0)(3,0)%
\psline{-}(3,0)(0,0)(-1,-1)(-1,-3)%
\psline{-}(-1,-3)(-1,-1)(-2,-2)%
\psline{-}(-3, -3)(-1,-1)(-3,-1)%
\psline{-}(-3, -1)(-1,-1)(0,0)(0,3)%
\end{pspicture}}
\newcommand{\cotangfanonev}{%
\psset{xunit=0.5cm,yunit=.4cm}
\begin{pspicture}(-3.2,-3.2)(3.2,3.2)%
\pspolygon[linewidth=.001pt,linecolor=white,fillstyle=solid,fillcolor=gray1](0,3)(0,0)(1,0)(3,2)%
\pspolygon[linewidth=.001pt,linecolor=white,fillstyle=solid,fillcolor=gray3](3,2)(1,0)(3,0)%
\pspolygon[linewidth=.001pt,linecolor=white,fillstyle=solid,fillcolor=gray5](3,0)(1,0)(1,-3)%
\pspolygon[linewidth=.001pt,linecolor=white,fillstyle=solid,fillcolor=gray4](1,-3)(1,0)(0,0)(-3,-3)%
\pspolygon[linewidth=.001pt,linecolor=white,fillstyle=solid,fillcolor=gray2](-3,-3)(0,0)(-3,0)%
\pspolygon[linewidth=.001pt,linecolor=white,fillstyle=solid,fillcolor=gray6](-3,0)(0,0)(0,3)%

\psset{linewidth=1pt}%
\psline{-}(0,3)(0,0)(1,0)(3,2)%
\psline{-}(3,2)(1,0)(3,0)%
\psline{-}(3,0)(1,0)(1,-3)%
\psline{-}(1,-3)(1,0)(0,0)(-3,-3)%
\psline{-}(-3,-3)(0,0)(-3,0)%
\psline{-}(-3,0)(0,0)(0,3)%
\end{pspicture}}
\newcommand{\tailFan}{%
\psset{xunit=0.5cm,yunit=.4cm}
\begin{pspicture}(-3.2,-3.2)(3.2,3.2)%
\pspolygon[linewidth=.001pt,linecolor=white,fillstyle=solid,fillcolor=gray1](0,3)(0,0)(3,3)%
\pspolygon[linewidth=.001pt,linecolor=white,fillstyle=solid,fillcolor=gray3](3,3)(0,0)(3,0)%
\pspolygon[linewidth=.001pt,linecolor=white,fillstyle=solid,fillcolor=gray5](3,0)(0,0)(0,-3)%
\pspolygon[linewidth=.001pt,linecolor=white,fillstyle=solid,fillcolor=gray4](0,-3)(0,0)(-3,-3)%
\pspolygon[linewidth=.001pt,linecolor=white,fillstyle=solid,fillcolor=gray2](-3,-3)(0,0)(-3,0)%
\pspolygon[linewidth=.001pt,linecolor=white,fillstyle=solid,fillcolor=gray6](-3,0)(0,0)(0,3)%

\psset{linewidth=1pt}%
\psline{-}(0,3)(0,-3)%
\psline{-}(-3,0)(3,0)%
\psline{-}(-3,-3)(3,3)%
\end{pspicture}}
\newcommand{\degreeFan}{%
\psset{xunit=0.5cm,yunit=.4cm}
\begin{pspicture}(-3.2,-3.2)(3.2,3.2)%
\pspolygon[linewidth=.001pt,linecolor=white,fillstyle=solid,fillcolor=gray1](0,3)(0,1)(1,1)(3,3)%
\pspolygon[linewidth=.001pt,linecolor=white,fillstyle=solid,fillcolor=gray3](3,3)(1,1)(1,0)(3,0)%
\pspolygon[linewidth=.001pt,linecolor=white,fillstyle=solid,fillcolor=gray5](3,0)(1,0)(0,-1)(0,-3)%
\pspolygon[linewidth=.001pt,linecolor=white,fillstyle=solid,fillcolor=gray4](0,-3)(0,-1)(-1,-1)(-3,-3)%
\pspolygon[linewidth=.001pt,linecolor=white,fillstyle=solid,fillcolor=gray2](-3,-3)(-1,-1)(-1,0)(-3,0)%
\pspolygon[linewidth=.001pt,linecolor=white,fillstyle=solid,fillcolor=gray6](-3,0)(-1,0)(0,1)(0,3)%
\pspolygon[linewidth=.001pt,linecolor=white,fillstyle=crosshatch,hatchsep=0.1,hatchangle=0](0,1)(1,1)(1,0)(0,-1)(-1,-1)(-1,0)%

\psset{linewidth=1pt}%
\psline{-}(0,1)(1,1)(1,0)(0,-1)(-1,-1)(-1,0)(0,1)%
\psline{-}(0,1)(0,3)
\psline{-}(1,1)(3,3)%
\psline{-}(1,0)(3,0)%
\psline{-}(0,-1)(0,-3)%
\psline{-}(-1,-1)(-3,-3)%
\psline{-}(-1,0)(-3,0)%
\end{pspicture}}
\newcommand{\degenerationZero}{%
\psset{xunit=0.5cm,yunit=.4cm}
\begin{pspicture}(-3.2,-3.2)(3.2,3.2)%
\pspolygon[linewidth=.001pt,linecolor=white,fillstyle=solid,fillcolor=gray1](0,3)(0,1)(1,1)(3,3)%
\pspolygon[linewidth=.001pt,linecolor=white,fillstyle=solid,fillcolor=gray3](3,3)(1,1)(1,0)(3,0)%
\pspolygon[linewidth=.001pt,linecolor=white,fillstyle=solid,fillcolor=gray5](3,0)(1,0)(0,-1)(0,-3)%
\pspolygon[linewidth=.001pt,linecolor=white,fillstyle=solid,fillcolor=gray4](0,-3)(0,-1)(-1,-1)(-3,-3)%
\pspolygon[linewidth=.001pt,linecolor=white,fillstyle=solid,fillcolor=gray2](-3,-3)(-1,-1)(-1,0)(-3,0)%
\pspolygon[linewidth=.001pt,linecolor=white,fillstyle=solid,fillcolor=gray6](-3,0)(-1,0)(0,1)(0,3)%
\pspolygon[linewidth=.001pt,linecolor=white,fillstyle=solid,fillcolor=gray7](0,1)(1,1)(1,0)(0,-1)(-1,-1)(-1,0)%

\psset{linewidth=1pt}%
\psline{-}(0,1)(1,1)(1,0)(0,-1)(-1,-1)(-1,0)(0,1)%
\psline{-}(0,1)(0,3)
\psline{-}(1,1)(3,3)%
\psline{-}(1,0)(3,0)%
\psline{-}(0,-1)(0,-3)%
\psline{-}(-1,-1)(-3,-3)%
\psline{-}(-1,0)(-3,0)%
\end{pspicture}}
\begin{document}

\title{Cox Rings of Rational Complexity One $T$-Varieties}

\author[K.~Altmann]{Klaus Altmann}
\address{Institut f\"ur Mathematik und Informatik,
        Freie Universit\"at Berlin,
        Arnimallee 3,
        14195 Berlin, Germany}
\email{altmann@math.fu-berlin.de}

\author[L.~Petersen]{Lars Petersen}
\address{Institut f\"ur Mathematik und Informatik,
         Freie Universit\"at Berlin
	 Arnimallee 3,
	 14195 Berlin, Germany}
\email{petersen@math.fu-berlin.de}

\subjclass[2000]{14H30,14L30,14M99}
\keywords{Cox ring, branched coverings, torus actions}

\begin{abstract}
Let $X$ be a Mori dream space together with an effective torus action of complexity one. In this note, we construct a polyhedral divisor $\cD_{\Cox}$ on a suitable finite covering of $\PP^1$ which corresponds to the Cox ring of $X$. This description allows for a detailed study of torus orbits and deformations of $\Cox(X)$. Moreover, we present coverings of $\PP^1$ together with an action of a finite abelian group $A$ in terms of so-called $A$-divisors of degree zero on $\PP^1$. 
\end{abstract}

\maketitle


\section{Introduction}
\label{intro}

For simplicity,
all varieties appearing in this article are supposed to be complex.


\subsection{Mori dream spaces}
\label{MDS}

Let $X$ be a normal, $\Q$-factorial, complete variety
with a finitely generated divisor class group $\Cl(X)$.
Thus, $\Cl_\Q(X):= \Cl(X)\otimes_\Z\Q$
coincides with $\Pic_\Q(X)$ as well as with the N\'{e}ron-Severi group $N_\Q^1(X)$. This setting gives rise to the definition
of the $\Cl(X)$-graded abelian group
$$
\Cox(X):=\bigoplus_{D\in\Cl(X)}\kG(X,\CO_X(D)).
$$
Moreover, $\Cox(X)$ carries a canonical ring structure
(the ``Cox ring'' of $X$): If
$\Cl(X)$ is torsion free, then this can be achieved by taking the divisors $D$
from a fixed section $\Cl(X)\hookrightarrow\Div(X)$ of the natural surjection
$\Div(X)\surj \Cl(X)$. If $\Cl(X)$ has torsion, then one has to work
with a presentation of $\Cl(X)$ by a finitely generated
subgroup of $\Div(X)$
with relations instead. See \cite[Sect.2]{tvarcox} for a detailed treatment
of this case.%
\\[1ex]
By \cite{MDS}, we call $X$ a Mori dream space (MDS) if $\Cox(X)$ is a finitely
generated $\CC$-algebra. This property has two important consequences.
First, every nef divisor is semi-ample, i.e.\ the latter property can be
checked numerically. Second,
since the Cox ring carries the information about
all rational maps of $X$, the data of the minimal model
program (MMP) are finite: Not only are the ample, movable, and effective 
cones $\Nef(X)\subseteq\Mov(X)\subseteq\Eff(X)\subseteq N^1_\Q(X)$ polyhedral,
but $\Eff(X)$ carries a finite polyhedral subdivision such that the 
birational transformations $X_i$
of $X$ correspond to the cells of this subdivision.
Actually, the $X_i$ appear as GIT quotients of the total coordinate space
$\Spec\Cox(X)$, and the polyhedral subdivision of $\Eff(X)$ corresponds to
the GIT equivalence classes.


\subsection{The toric case}
\label{toricCox}

Probably the most well known example of an MDS is the class of toric varieties.
Let $M,N$ be two mutually dual, finitely generated, free abelian groups,
and denote by $M_\Q, N_\Q$ the associated rational vector spaces. 
For a fan $\Sigma$ in $N_\Q$ we denote by $\toric{\Sigma}$ the associated toric variety.
It was shown in \cite{cox} that $\Cox(\toric{\Sigma})$ is a polynomial ring whose variables correspond to the rays of $\Sigma$.
This fact, by the way, characterizes toric varieties.
\\[1ex]
While this describes $\Cox(\toric{\Sigma})$ completely in algebraic terms,
we would like to emphasize a slightly alternative,
more polyhedral point of view:
By abuse of notation, we denote the first non-trivial lattice point on
a ray $\varrho\in\Sigma(1)\subseteq N$ with the same letter~$\varrho$.
Then we consider the
canonical map $\varphi:\Z^{\Sigma(1)}\to N$, $e(\varrho) \mapsto \varrho$.
It sends some faces (including the rays)
of the positive orthant $\Q^{\Sigma(1)}_{\geq 0}$
to cones of the fan $\Sigma$. Applying the functor $\toric{\kbb}$, we obtain
a rational map
$\Spec \CC[z_\varrho\kst \varrho\in\Sigma(1)]\rato \toric{\Sigma}$.
In particular, we recover the affine spectrum of
$\CC[z_\varrho\kst \varrho\in\Sigma(1)]=\Cox(\toric{\Sigma})$
as the toric variety $\toric{\Q^{\Sigma(1)}_{\geq 0}}$.
Thus, the Cox ring of a toric variety gives rise to an affine toric variety
itself,
and the defining cone $\Q^{\Sigma(1)}_{\geq 0}$ can be seen as a 
polyhedral resolution of the given fan $\Sigma$, since all linear relations among the rays have been removed.


\subsection{Action of the Picard torus}
\label{jarek}

In \cite{tvar_1} the concept of affine toric varieties was generalized
to describe normal, $n$-dimensional,
affine varieties $Z$ with an effective action
of a torus $T\cong (\CC^*)^k$.
The combinatorial part of the language is based on the character lattice 
$M:=\gHom(T,\CC^*)\cong\Z^k$ and its dual $N$. 
While toric varieties 
as in (\ref{toricCox}) (the case $k=n$) are determined by cones and fans
inside $N_\Q$, the general situation ($k\leq n$)
must also involve an $(n-k)$-dimensional
geometric part, i.e.\ some quasiprojective variety $Y$.
If $Z$ is given, then $Y=Z\chQ T$ will be the so-called Chow quotient.
Both parts are combined in the notion of a p-divisor
$\pDiv=\sum_i\Delta_i\otimes D_i$ on $Y$, i.e.\ a divisor on $Y$ where the
coefficients $\Delta_i$ are polyhedra in $N_\Q$. 
Roughly speaking, \cite{tvar_1} establishes a one-to-one correspondence
$Z$ $\leftrightarrow$ $(N,Y,\pDiv)$.
See (\ref{pDivDef}) for the details.
\\[1ex]
If, as in (\ref{MDS}), $X$ is an MDS, and if we suppose that
$\Cl(X)$ is torsion free,
then $Z:=\Spec\Cox(X)$ is a normal affine variety, and the
$\Cl(X)$-grading encodes an effective action of the so-called Picard torus
$T:=\gHom(\Cl(X),\CC^*)$. Thus, one could ask for a description of
$Z$ in terms of some p-divisor $\pDiv$ on some $Y$.
This has been done in \cite{coxmds}, and in the case of smooth MDS surfaces,
the result is as follows:
$Y=X$ and, up to shifts of the polyhedral coefficients,
$\pDiv=\sum_{E\subseteq X}\Delta_E\otimes E$ with
$$
\Delta_E=\{D\in\Eff(X)\subseteq\Cl_\Q(X)\kst
(D\cdot E)\geq -1 \mbox{ and } (D\cdot E')\geq 0 \mbox{ for $E'\neq E$}\}
$$ 
where $E,E'$ run through all negative curves in $X$.
One easily sees that the common tailcone, cf.\ (\ref{pDivDef}),
of the $\Delta_E$ equals $\Nef(X)$ which is dual to $\Eff(X)$. Note that the latter cone carries the
degrees of $\Cox(X)$. In the case that $X$ is a del Pezzo surface, the formula for $\Delta_E$ simplifies to
$\Delta_E= \ko{0 E} + \Nef(X)\subseteq\Cl_\Q(X)$.


\subsection{Complexity one}
\label{c1JH}

We now return to the case where our originally given variety $X$ 
comes with an effective action of a torus $T$.
In contrast to (\ref{toricCox}), we assume that
this action is of complexity one, i.e.\ the Chow quotient
$Y:=X\chQ T$ is a curve. For $X$ to be an MDS, we must have
$Y=\PP^1$, and such an $X$
is always representable as $X(\pFan)$ for a so-called
divisorial fan $\pFan=\sum_{p\in\PP^1}\pFan_p\otimes [p]$.
This notion will be recalled in (\ref{non-aff}), but
it is very similar to the notion of a p-divisor previously
used in (\ref{jarek}).
The points $p\in\PP^1$ yield divisors $[p]$, and
since $X$ is no longer affine, the polyhedral coefficients $\Delta_p$
have been replaced by polyhedral subdivisions $\pFan_p$, so-called slices, of the vector space
$N_\Q$ with $N:=\gHom(\CC^*,T)$.
\\[1ex]
In \cite{tvarcox}, the generators and relations of the
Cox ring of $X(\pFan)$ were calculated in terms of $\pFan$.
More precisely, if $\kR$ denotes the set of rays of $\tail\pFan$ being disjoint to $\deg\pFan$, cf.\ (\ref{globX}), and if $p\in\PP^1$ runs through the points with non-trivial slices $\pFan_p$, then
\cite[Corollary 4.9]{tvarcox} says that
$$
\textstyle
\Cox(X)\;=\;\CC[S_\varrho\kst \varrho\in\kR]\;\otimes_{\CC}\;
\CC[T_{v,p}\kst v\in\pFan_p(0)]\big/
\sum_p\beta_p \prod_{v\in\pFan_p(0)} T_{v,p}^{\mu(v)}
$$
where $\beta_\kbb$ runs through all (or a basis of the) linear relations
$\sum_p\beta_p\,\til{p}=0$ among some fixed lifts 
$\til{p}\in\CC^2\setminus\{0\}$ of the points $p\in\PP^1$
and $\mu(v)$ denotes the denominator of the vertices $v\in\pFan_p(0)\subseteq N_\Q$.

\begin{example}
\label{ex-Omega}
\cite[Example 4.4]{tvarcox}.
The three slices $\fan_0,\fan_1,\fan_{\infty}$ (see Figure~\ref{fig:cotangFan}) for $p=0,1,\infty \in\PP^1$
with tailfan $\Sigma$ and degree $\deg \pFan$ (see Figure~\ref{fig:cotangPlus}) encode the projectivized cotangent bundle $X=\PP(\Omega_{\PP^2})$
over $\PP^2$. The latter is a toric variety, and its torus $T\subseteq\PP^2$ still acts on $X$.

\begin{figure}[htbp]
  \centering
  \subfigure[$\fan_0$]{\cotangfannullv}
  \subfigure[$\fan_1$]{\cotangfanonev}
  \subfigure[$\fan_\infty$]{\cotangfaninftyv}
  \caption{Fansy divisor of $\PP(\Omega_{\PP^2})$.}
  \label{fig:cotangFan}
\end{figure}

In this example, we have $\kR=\emptyset$, and 
there is exactly one relation among
$\til{p}=(1,0), (1,1), (0,1)\in\CC^2$, namely $\beta=(1,-1,1)$.
Since all vertices $v_1,v_2\in\pFan_0(0)$,
$v_3,v_4\in\pFan_1(0)$, and  $v_5,v_6\in\pFan_{\infty}(0)$
are lattice points, we end up with
$$
\textstyle
\Cox(X)\;=\;\CC[T_{1},\ldots,T_6]\big/
(T_1T_2-T_3T_4+T_5T_6).
$$

\begin{figure}[htbp]
  \centering
  \subfigure[$\Sigma = \tail \fan$]{\tailFan}
  \subfigure[$\deg \fan$]{\degreeFan}
  \caption{Tailfan and degree of $\fan$.}
  \label{fig:cotangPlus}
\end{figure}
\end{example}


\subsection{Cox rings as p-divisors}
\label{c1KL}

Our approach to the Cox ring
is somewhat different. Keeping the setting from (\ref{c1JH}), we will describe $\Cox(X)$ (or rather its affine spectrum)
as a p-divisor. So far, this is similar to the viewpoint of \cite{coxmds}
described in (\ref{jarek}). However, the special feature of the present paper
is that the given $T$-action on $X$ is bequeathed to $\Cox(X)$, and by combining it with the action of the Picard torus, $\Spec \Cox(X)$ turns into a complexity one variety, too. Our goal is to stay within the language of \cite{tvar_1} and to present $\Cox (X)$ as a p-divisor $\cD_{\Cox}$ on a curve $C$.
\\[1ex]
The combination of the two torus actions might in general involve
torsion. Thus, one could try to understand $\Spec \Cox(X)$ as a
variety with the complexity one action of a diagonizable group.
Then, the Chow quotient $Y$ would be the same $\PP^1$ carrying the divisorial fan $\pFan$. However, since there is no general theory of p-divisors
for those groups yet, we have to divide out the torsion.
Our main result is the description of the now well-defined p-divisor
$\pDiv_{\Cox}$ in Theorem~\ref{th-mainTh}.
Removing the torsion gives rise to a finite covering $C \to \PP^1$, so that our polyhedral divisor $\pDiv_{\Cox}$ will live on a curve $C$ of probably higher genus rather than on $\PP^1$.
\\[1ex]
Yet, if the class group $\cl(X)$ is torsion free, then this will not happen, i.e.\ $C \to \PP^1$ is the identity map. In this case, the p-divisor $\pDiv_{\Cox}$ describing
$\Spec \Cox(X)$ utilizes the very same points $p\in\PP^1$ 
as $\pFan$:
Denoting by $p \in \kP \subseteq \PP^1$ the set of points with non-trivial 
slices $\pFan_p$ and by $\kV:=\bigcup_{p\in\kP}\pFan_p(0)$ the corresponding vertices, we define the compact polytopes
$$
\Delta_p^c := \conv\{e(v)/\mu(v)\kst v\in \pFan_p(0)\}
\subseteq\kFQ
$$
where the $e(v)\in\Z^{\kV}$ denote the canonical basis vectors and
$\mu(v)\in\Z_{\geq 1}$ is again the smallest positive integer turning
$\mu(v)v$ into a lattice point, i.e.\ transfers it from $N_\Q$ into a
genuine element of $N$, cf.\ (\ref{funkX}).
This gives rise to the polyhedral cone
$$
\sigma \;:=\; \Q_{\geq 0}\cdot\prod_{p\in\kP} \Delta_p^c 
\;+\; \kpR \;\subseteq\; \kpVR.
$$

\begin{theorem}
\label{th-mainThIntro}
If $\cl(X)$ is torsion free, then the p-divisor $\cD_{\Cox}$ of
$\Spec \Cox(X)$ is, up to shifts of the polyhedral coefficients,
given by $\sum_{p\in\kP}(\Delta_p^c+\sigma)\otimes [p]$
on $\PP^1$. 
\end{theorem}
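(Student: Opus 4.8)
The plan is to connect the explicit generators-and-relations description of $\Cox(X)$ from \cite[Corollary 4.9]{tvarcox}, recalled in (\ref{c1JH}), with the dictionary of \cite{tvar_1} between affine complexity-one $T$-varieties and polyhedral divisors on curves. Concretely, set $Z:=\Spec\Cox(X)$. Since $\cl(X)$ is torsion free, $Z$ is a normal affine variety and it carries two commuting torus actions: the original $T$-action inherited from $X$, and the action of the Picard torus $\gHom(\cl(X),\CC^*)$ coming from the $\cl(X)$-grading. The combined torus $\wt T$ has one-dimensional generic orbit space, so $Z$ is of complexity one, and because $\cl(X)$ is torsion free the combination involves no torsion, so the Chow quotient is again $\PP^1$ and we land squarely in the p-divisor framework. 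The lattice for the combined action is $\kF=\Z^{\kVR}$ (the free abelian group on the generators $S_\varrho$ for $\varrho\in\kR$ and $T_{v,p}$ for $v\in\pFan_p(0)$), since the $\wt T$-weights of these generators form a basis.

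First I would make the combined grading explicit: the weight of $S_\varrho$ is the basis vector $e(\varrho)\in\Z^{\kR}$ and the weight of $T_{v,p}$ is $e(v)\in\Z^{\kV}$, while the relations $\sum_p\beta_p\prod_{v\in\pFan_p(0)}T_{v,p}^{\mu(v)}=0$ are homogeneous — here is where the normalization $e(v)/\mu(v)$ enters, since the $p$-th summand has $\wt T$-weight $\sum_{v\in\pFan_p(0)}\mu(v)\,e(v)$, and for the relation to be homogeneous these weights must agree for all $p$, forcing a single weight $w$ that is a positive multiple of $\prod_p\Delta_p^c$ in $\kFQ$. Next I would recognize $Z$ as a downgrading, in the sense of \cite{tvar_1}, of the toric variety $\Spec\CC[S_\varrho,T_{v,p}]/(\text{relations})$: without the relations this is $\AA^{\kVR}$, and the relations cut out precisely the subvariety whose p-divisorial description on $\PP^1$ is governed by how the monomials $\prod_{v\in\pFan_p(0)}T_{v,p}^{\mu(v)}$ sit over the points $p$. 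This is exactly the mechanism in (\ref{toricCox}): a polyhedral divisor encoding a finitely generated algebra presented by binomial-type relations tied to points of a curve. The cone $\sigma$ is then forced to be the weight cone of $Z$, i.e.\ the cone over $w=\Q_{\geq 0}\cdot\prod_p\Delta_p^c$ together with the free directions $\kpR$ coming from the polynomial variables $S_\varrho$ (which satisfy no relations), and the slices must be the polytopes $\Delta_p^c$ themselves by the standard recipe that reads off the coefficient polyhedron at $[p]$ from the exponents of the variables appearing in the $p$-th term of each relation, divided by the weight.

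The cleanest route is probably not to reprove everything from scratch but to invoke the toric downgrade procedure of \cite[Sect.\ 11]{tvar_1}: present the toric variety $\AA^{\kVR}$ with its big torus, impose the binomial equations as a subtorus-invariant subvariety, and track how the standard p-divisor of affine space restricts. Under the identification of the quotient torus with $\wt T$, the defining equations $\sum_p\beta_p(\cdot)$ become the statement that a certain rational function on $\PP^1$ with its divisor supported on $\kP$ is being realized — the coefficients $\beta_p$ and the lifts $\til p$ encode exactly a principal divisor relation, which is what the p-divisor formalism on $\PP^1$ is built to capture. After this identification, the polyhedral coefficient over $[p]$ is the convex hull of the relevant scaled exponent vectors, namely $\Delta_p^c$, shifted into the common tailcone $\sigma$; points $p\notin\kP$ contribute the trivial (tailcone-only) slice, so they may be omitted.

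The main obstacle I expect is pinning down the correct \emph{shift} of each polyhedral coefficient and verifying that $\sigma$ is genuinely the tailcone common to all the $\Delta_p^c+\sigma$ — i.e.\ checking the p-divisor axioms from (\ref{pDivDef}): that $\cD_{\Cox}$ is \emph{proper} (in particular that $\deg\cD_{\Cox}$ sits strictly inside $\sigma$ off finitely many points, automatic here since only finitely many slices are nontrivial) and that the evaluations $\cD_{\Cox}(u)=\sum_p\min_{v}\mul{u}{\cdot}[p]$ are semiample/big for $u$ in the relative interior of $\sigma\dual$. This amounts to matching, weight-by-weight, the dimension of the graded piece $\kG(\PP^1,\cO(\cD_{\Cox}(u)))$ against the dimension of the corresponding isotypic component of $\Cox(X)$ computed from the explicit presentation — a Riemann–Roch count on $\PP^1$ against a monomial count modulo the binomial relations. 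The statement "up to shifts of the polyhedral coefficients" is precisely the hedge that absorbs the ambiguity in choosing the lifts $\til p$ and a section of the grading; once a consistent choice is fixed, the computation is bookkeeping, but getting the bookkeeping to close up is the real content.
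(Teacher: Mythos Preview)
Your approach is genuinely different from the paper's, and it contains a real gap.

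\textbf{The gap.} You assert that the one-parameter lattice for the combined torus $\wt T$ is $\kF=\Z^{\kVR}$, with the generators $S_\varrho,T_{v,p}$ having the basis vectors $e(\varrho),e(v)$ as weights. But then the trinomial relations cannot be $\wt T$-homogeneous: the $p$-th monomial $\prod_{v\in\pFan_p(0)}T_{v,p}^{\mu(v)}$ has weight $\sum_{v\in\pFan_p(0)}\mu(v)\,e(v)$, and these vectors are linearly independent in $\Z^{\kVR}$ for distinct $p$, so they are \emph{not} equal. You half-notice this (``forcing a single weight $w$''), but you never pass to the correct lattice. The effective torus has one-parameter lattice $\tN=\ker\kQ\subsetneq\Z^{\kVR}$ (equivalently character lattice $\tM$, the quotient of $(\Z^{\kVR})^*$ by the image of $\kQQ$); this is where $\sigma$ and the $\wt\Delta_p$ actually live. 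Without this correction your downgrade computation cannot even be set up: the ambient $\AA^{\kVR}$ is toric for the wrong torus, and $\Spec\Cox(X)$ is not a toric subvariety whose p-divisor can be ``restricted'' by any standard mechanism. The sentence about $w$ being ``a positive multiple of $\prod_p\Delta_p^c$'' conflates a lattice point with a polytope and does not make sense as written.

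\textbf{How the paper proceeds instead.} The paper never touches the generators-and-relations presentation from \cite{tvarcox}. It works directly from the definition $\Cox(X)=\bigoplus_{D}\Gamma(X,\cO_X(D))$ modulo the identifications coming from principal divisors (Corollary~\ref{cor-ClXP1}), and uses the section formula $\Gamma(X,\cO_X(D))(u)=\Gamma(\PP^1,\cO_{\PP^1}(D^*(u)))$ from (\ref{globX}). After killing the $M$-degree via $\chi^{-u}$, each graded piece is $\Gamma\big(\PP^1,\sum_p\min\langle\Delta_p,D\rangle\,[p]\big)$, and the section $s:\kL\hookrightarrow\kF$ converts this into $\Gamma\big(\PP^1,\sum_p\min\langle\wt\Delta_p,w\rangle\,[p]\big)$ for $w\in\tM$. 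This \emph{is} the p-divisor evaluation $\pDiv_{\Cox}(w)$, so the identification is immediate and no Riemann--Roch matching or normality check is needed. Your closing paragraph (dimension count of graded pieces on $\PP^1$) is in spirit what the paper does, but the paper arrives there without the detour through the explicit presentation, and with the correct lattice $\tN$ built in from the start via the map $\kQ$.
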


For the details concerning the shifts as well as for the general result that also covers the case of a class group with torsion, see
Theorem~\ref{th-mainTh}.
Comparing this with the toric case of (\ref{toricCox}),
we observe a similar construction of resolving linear dependencies
among elements of $N_\Q$ by reserving separate dimensions for each
of them. The Cox ring of $T$-varieties of complexity one is therefore closely related to a fiberwise toric Cox construction with respect to the rational map $X\rato\PP^1$.

\begin{example}
\label{ex:coxDivisorCotang}
Let $\pFan$ be the divisorial fan of Example~\ref{ex-Omega}. The three polytopes $\Delta_p^c$ are compact edges,
and $\sigma$ becomes a four-dimensional cone over a cube.
Hence, according to Example~\ref{ex-Omega},
the resulting p-divisor is that of the affine cone over $\Grass(2,4)$
from \cite[p.\,849]{fansy}.
\end{example}

\subsection{}
\label{plan}
The paper is organized as follows:
In Section~\ref{recallPD} we recall 
the notions of p-divisors and divisorial fans.
Section~\ref{finiteCov} is of independent interest and contains the 
necessary tools for the description of finite smooth coverings of $\PP^1$. The language was chosen to be as close as possible to that of p-divisors. This means that we present a covering of $\PP^1$ together with an action of a finite abelian group $A$ in terms of an $A$-divisor of degree zero on $\PP^1$, cf.\ Theorem \ref{th-smoothness}.
Section~\ref{pDivCox} presents our main result
together with its proof, the latter beginning with a diagram comprising all necessary data (\ref{torsionDiagram}). We conclude the paper with a couple of examples in Section~\ref{ex}.

\section{Polyhedral Divisors}
\label{recallPD}


\subsection{Definition of p-divisors}
\label{pDivDef}

First, we recall the basic notions of \cite{tvar_1}. Let $T$ be a
$k$-dimensional affine torus ($\cong(\CC^*)^k$), and let $M$, $N$ denote the mutually dual, free abelian 
groups ($\cong\Z^k$) of characters and one-parameter subgroups, 
respectively.
In particular, $T$ can be recovered as $T=\Spec\CC[M]=N\otimes_\Z\CC^*$.
Denote by $M_\Q$, $N_\Q$ the corresponding vector spaces over $\Q$.
Then, for a polyhedral cone $\sigma\subseteq N_\Q$,
we may consider the semigroup (with respect to Minkowski addition)
$$ 
\Pol^+_\Q(N,\sigma):=\{\Delta\subseteq N_\Q\kst
\Delta=\mbox{polyhedron with} \,\tail\Delta=\sigma\}
\subseteq \Pol_\Q(N,\sigma)
$$
where $\tail\Delta:=\{a\in N_\Q\kst \Delta+a\subseteq \Delta\}$
denotes the {\em tailcone} of $\Delta$
and $\Pol\supseteq\Pol^+$ is the associated Grothendieck group.
\\[1ex]
On the other hand, let $Y$ be a normal and semiprojective 
(i.e.\ $Y\to Y_0$ is projective over an affine $Y_0$) variety.
By $\CaDiv(Y)$ we denote the group of Cartier divisors on $Y$.
A $\Q$-Cartier divisor on $Y$ is called {\em semiample} if it has
a positive, base point free multiple.
For an element 
$$
\pDiv = \sum_i \Delta_i\otimes D_i\in 
\Pol_\Q(N,\sigma)\otimes_\Z \CaDiv(Y)
$$ 
with $\Delta_i\in\Pol^+(N_\Q,\sigma)$
and effective divisors $D_i$, we may consider its evaluations
$$
\pDiv(u) := \sum_i \min\langle \Delta_i,u\rangle D_i\in \CaDiv_\Q(Y)
$$
on elements $u\in\sigma\dual$. We call $\pDiv$ a polyhedral or 
{\em p-divisor} if the $\pDiv(u)$ are semiample and, moreover,
big for $u\in\innt\sigma\dual$.
The common tailcone $\sigma$ of the coefficients $\Delta_i$
will be denoted by $\tail(\pDiv)$.
The positivity assumptions imply that
$\pDiv(u)+\pDiv(u')\leq \pDiv(u+u')$, so that
$\CO_Y(\pDiv):= \bigoplus_{u\in\sigma^\vee\cap M} \CO_Y(\pDiv(u))$
becomes a sheaf of rings. We define $\til{X}(\pDiv):= \Spec_Y \CO_Y(\pDiv)$ over $Y$, and we furthermore set $X(\pDiv):= \Spec \kG(Y,\CO(\pDiv))$.
\\[1ex]
The latter space does not change if $\pDiv$ is pulled back via a
birational modification $Y'\to Y$ or if $\pDiv$ is modified by a
polyhedral {\em principal} divisor on $Y$ where the latter denotes an element in the image of the natural map
$N\otimes_\Z\kk(Y)^*\to \Pol_\Q(N,\sigma)\otimes_\Z \CaDiv(Y)$.
Two p-divisors that differ by chains of
the upper operations are called equivalent.
Note that this implies that one can always ask for $Y$ to be smooth.

\begin{theorem}[\cite{tvar_1}, Theorems (3.1), (3.4); Corollary (8.12)]
\label{thm-equivPT}
The map $\pDiv \mapsto X(\pDiv)$ yields a bijection between
equivalence classes of p-divisors and normal, affine varieties with an effective $T$-action.
\end{theorem}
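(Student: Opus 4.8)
\emph{Strategy.} The statement bundles a construction direction and a recovery direction, so I would prove it by exhibiting two assignments and verifying that they are mutually inverse modulo the equivalence relation generated by birational pullback and polyhedral principal divisors described in (\ref{pDivDef}).

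\emph{From $\pDiv$ to $X(\pDiv)$.} Given a p-divisor $\pDiv=\sum_i\Delta_i\otimes D_i$ with common tailcone $\sigma$, the concavity inequality $\pDiv(u)+\pDiv(u')\leq\pDiv(u+u')$ already recorded in (\ref{pDivDef}) makes $\CO_Y(\pDiv)=\bigoplus_{u\in\sigma\dual\cap M}\CO_Y(\pDiv(u))$ a sheaf of $M$-graded $\CO_Y$-algebras, and the grading encodes the $T$-action. I would first check that $A:=\kG(Y,\CO_Y(\pDiv))$ is a finitely generated, normal $\kk$-algebra. Finite generation uses that $\sigma\dual\cap M$ is finitely generated (Gordan) together with semiampleness of the $\pDiv(u)$, which guarantees that sufficiently high multiples are globally generated, so that products of sections surject onto $\kG(Y,\CO_Y(\pDiv(u)))$ in large degree; normality follows because each graded piece $A_u\subseteq\kk(Y)$ is integrally closed as a module over $A_0$. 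Bigness of $\pDiv(u)$ for $u\in\innt\sigma\dual$ ensures the weight cone is full-dimensional, hence the $T$-action on $X(\pDiv)=\Spec A$ is effective with the expected complexity. Thus the forward map lands in the asserted class.

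\emph{From $Z$ to a p-divisor.} For a normal affine $Z=\Spec A$ with effective $T$-action, the action is an $M$-grading $A=\bigoplus_{u}A_u$; normality forces the weight monoid to be a saturated monoid $\sigma\dual\cap M$ for a cone $\sigma\subseteq N_\Q$. Each nonzero $A_u$ is a rank-one reflexive $A_0$-module inside the function field, so after passing to a suitable semiprojective model $Y$ of the invariant functions --- concretely the Chow quotient $Y=Z\chQ T$, which one shows to be normal and semiprojective (Cor.~(8.12)) --- one has $A_u=\kG(Y,\CO_Y(D(u)))$ for honest $\Q$-Cartier divisors $D(u)$, well defined up to principal divisors. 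The resulting map $u\mapsto D(u)$ on $\sigma\dual$ is positively homogeneous and, by subadditivity of $A_\kbb$, concave and piecewise linear with finitely many domains of linearity. Reading this off coefficientwise, the function $u\mapsto\coeff_{D_i}D(u)$ is concave, piecewise linear, and homogeneous of degree one on $\sigma\dual$; by support-function duality it is exactly $u\mapsto\min\langle\Delta_i,u\rangle$ for a unique polyhedron $\Delta_i$ with $\tail\Delta_i=\sigma$. Setting $\pDiv:=\sum_i\Delta_i\otimes D_i$ recovers a p-divisor with $\pDiv(u)=D(u)$, and semiampleness plus bigness of the $D(u)$ correspond to finite generation of $A$ and to $Z$ having the right dimension.

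\emph{Mutual inverseness and the main obstacle.} Applying the second construction to $X(\pDiv)$ reproduces $\pDiv(u)$ as $D(u)$ on the chosen model, hence $\pDiv$ up to pullback. Conversely, the ambiguities in the recovery --- the choice of model $Y'\to Y$ and the choice of homogeneous generators trivializing each $A_u$ --- are precisely pullback along a birational modification and modification by a polyhedral principal divisor in the image of $N\otimes_\Z\kk(Y)^*$, i.e.\ the two operations generating the equivalence relation of (\ref{pDivDef}); so different choices yield equivalent p-divisors and the induced map on equivalence classes is well defined and bijective. The genuinely hard step is the construction and control of $Y$: one must produce a single semiprojective model on which \emph{all} the $D(u)$ are simultaneously Cartier and on which $u\mapsto D(u)$ has finitely many polyhedral chambers, and then match the positivity of $\pDiv$ (semiampleness, and bigness in the interior) exactly against the normal-affine-and-effective condition on $Z$. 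This is where the Chow-quotient construction and the proof of its semiprojectivity --- the substance behind Thm.~(3.4) and Cor.~(8.12) --- carry the weight of the argument.
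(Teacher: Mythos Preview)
The paper does not prove this theorem at all: it is stated with a citation to \cite{tvar_1} (Theorems (3.1), (3.4); Corollary (8.12)) and used as a black box, so there is no in-paper proof to compare your proposal against. Your sketch is a faithful outline of the strategy in the cited reference --- forward via the graded sheaf of algebras, backward via the Chow quotient and the support-function description of the weight divisors --- and correctly identifies the semiprojectivity of the Chow quotient and the simultaneous realization of all $D(u)$ on a single model as the genuine work; but for the purposes of this paper no argument is expected beyond the citation.
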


Finally, since we always want to assume that $Y$ is projective, we will explicitly allow $\emptyset\in\Pol^+_\Q(N,\sigma)$ as polyhedral coefficients of $\pDiv$. Then, $\pDiv=\sum_i \Delta_i\otimes D_i$ should be interpreted as $\sum_{\Delta_i\neq \emptyset} \Delta_i\otimes D_i|_{\loc\pDiv}$
with $\loc\pDiv:=Y\setminus \cup_{\Delta_i= \emptyset} D_i$.


\subsection{Complexity one}
\label{non-aff}

The situation in fact simplifies a lot in complexity one $(k=n-1)$, because $Y$ becomes a smooth projective curve.
For example, introducing the addition rule $\Delta+\emptyset:=\emptyset$, we may define the degree of
$\pDiv = \sum_i \Delta_i\otimes D_i$ as 
$$
\deg \pDiv := \sum_i (\deg D_i)\cdot \Delta_i\in \Pol^+_\Q(N,\sigma).
$$
In particular, $\deg \pDiv=\emptyset \iff \loc\pDiv\neq Y
\iff \loc \pDiv$ is affine.
This easily implies the following criterion: 
$\pDiv$ is a p-divisor if and only if 
$\deg\pDiv\subsetneq\tail\pDiv$
and, additionally,
$\pDiv((\gg 0)\cdot w)$ is principal for $w\in (\tail\pDiv)\dual$
with $w^\bot\cap(\deg\pDiv)\neq 0$. Note that the latter condition
is automatically fulfilled if $Y=\PP^1$.
\\[1ex]
The assignment $\pDiv\mapsto X(\pDiv)$ of (\ref{pDivDef})
is functorial. In particular, as was shown in \cite{NathHend}, if $\pDiv$ is a p-divisor containing some $\pDiv'=\sum_i \Delta_i'\otimes D_i$ (meaning that $\Delta'_i\subseteq\Delta_i$ for all $i$),
then $\pDiv'$ is again a p-divisor which induces a $T$-equivariant open embedding
$X(\pDiv')\hookrightarrow X(\pDiv)$
if and only if $\pDiv'\leq\pDiv$, i.e.\ if all the coefficients
$\Delta_i'\leq\Delta_i$ are faces,
and 
$$
\deg\pDiv'=\deg\pDiv\cap\tail\pDiv'.
$$
%
In particular, if p-divisors $\pDiv^\nu$ are arranged in a so-called
divisorial fan $\pFan=\{\pDiv^\nu\}$, then we can
glue the associated affine $T$-varieties $X(\pDiv^\nu)$ to obtain a separated
$X(\pFan)=\bigcup_\nu X(\pDiv^\nu)$,
cf.\ \cite{tvar_2}.
The so-called {\em slices}
$\pFan_i=\{\Delta^\nu_i\}$ form a polyhedral
subdivision in $N_\Q$, and $\tail\pFan:=\{\tail \pDiv_i\}$ is called the tail
fan of $\pFan$. 
Moreover, the subsets $\deg \pDiv \subsetneq \tail\pDiv$ glue
to a subset $\deg\pFan\subsetneq|\tail\pFan|\subseteq N_\Q$.
Roughly speaking, we understand that
$\pFan=\sum_i \pFan_i\otimes D_i$. Yet, to keep the full information
of the divisorial fan $\pFan$ one needs a labeling of the $\pFan_i$-cells indicating the p-divisor they come from. 
\\[1ex]
However, following \cite{NathHend}, the technical description can be reduced considerably since one may eventually forget about the labeling. Instead, one only needs to mark those cones $\tail \pDiv_i$ inside $\tail\pFan$ which have non-empty $\deg \pDiv_i$. The marked cones together with the formal sum $\pFan=\sum_i \pFan_i\otimes D_i$ then give the so-called \emph{marked fansy divisor} associated with the divisorial fan $\pFan$. Conversely, given a so-called {\em fansy divisor} $\pFan$, i.e.\ a formal sum $\pFan=\sum_i \pFan_i\otimes D_i$ of polyhedral subdivisions $\pFan_i$ with common tailfan, one can find a small system of axioms for a marking of the cones of $\tail\pFan$ to yield a divisorial fan $\pFan = \{\cD^\nu\}$, cf. \cite{NathHend}.


\subsection{Equivariant Weil divisors}
\label{DivX}
Let $\pDiv$ be a p-divisor of arbitrary complexity.
From \cite{tvar_1} we know that, similar to the toric case,
there is a decomposition of $\til{X}(\pDiv)$
into $T$-orbits $\orb(y,F)$ where $y\in Y$, and $F$ is a 
non-empty face of the polyhedron $\Delta_y:=\sum_{D_i\ni y}\Delta_i$
(the neutral element $\tail\pDiv$ serves as the 
sum of the empty set of summands).
The dimension of their closures is 
$\dim\, \ko{\orb}(y,F)=\dim\ko{y}+\codim_N F$.
Note that different $T$-orbits might be identified 
via the contraction $\til{X}(\pDiv)\to X(\pDiv)$.
\\[1ex]
In particular, the set of $T$-equivariant
prime divisors in $\til{X}(\pDiv)$ is twofold: On the one hand, we have the so-called {\em vertical} divisors
$$
D_{Z,v}:=\ko{\orb}(\eta(Z),v).
$$ 
They are associated with prime divisors $Z\subseteq Y$ (with $\eta(Z)$ being the generic point of $Z$) together with a vertex $v\in\Delta_Z$. On the other hand, there are the so-called {\em horizontal} divisors
$$
D_\varrho:= \ko{\orb}(\eta(Y),\varrho).
$$
These correspond to rays $\varrho$ of the cone $\tail\pDiv=\Delta_{\eta(Y)}$, where $\eta(Y)$ denotes the generic point of $Y$. The $T$-equivariant prime divisors in $X(\pDiv)$
correspond exactly to those on $\til{X}(\pDiv)$ that are not
contracted via $\til{X}(\pDiv)\to X(\pDiv)$.%
\\[1ex]
Let us now assume that $\pDiv$ is of complexity one. Then,
the vertical divisors $D_{(p,v)}$ (with $p\in Y(\CC)$ and
$v\in \Delta_p$) survive completely in $X(\pDiv)$.
In contrast, $D_\varrho$ becomes contracted if and only if
the ray $\varrho$ is not disjoint from $\deg\pDiv$.
Finally, we denote by $\,\tdiv X(\pDiv)$ the free abelian group of the $T$-equivariant Weil divisors in $X(\pDiv)$.


\subsection{Equivariant principal divisors}
\label{funkX}
Again, let $\cD$ be a p-divisor of arbitrary complexity, and let $K(Y)$ be the function field of $Y$. Then we have that
$\CO_Y(\pDiv)\subseteq K(Y)[M]\subseteq K(\til{X})=K(X)$.
In particular, $K(Y)[M]$ consists exactly of the semi-invariant,
i.e.\ $M$-homogeneous rational functions on $X$ -- and this
remains true for the non-affine $T$-varieties $X(\pFan)$ obtained by gluing.
\\[1ex]
Identifying a ray $\varrho$ with its primitive generating lattice vector and denoting by $\mu(v)$ the smallest integer $k \geq 1$ for $v\in N_\Q$
such that $k\cdot v$ is a lattice point, one has the following characterization of $T$-equivariant principal divisors on $\til{X}$ or $X$:

\begin{theorem}[\cite{tidiv}, Section 3]
\label{thm-principDiv}
Let $f(y)\chi^u\in K(Y)[M]$. The associated principal divisor 
on $\til{X}$ or $X$ is then given by
$$
\div\big(f(y)\chi^u\big)=
\sum_{\varrho} \langle\varrho,u\rangle D_\varrho +
\sum_{(Z,v)}\mu(v)\big(\langle v,u\rangle + \ord_Z f\big) \cdot D_{(Z,v)}
\vspace{-0.3ex}
$$
where, if focused on $X$, one is supposed to omit all prime divisors being contracted via $\til{X}\to X$.
\end{theorem}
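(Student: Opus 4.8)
\emph{Proof strategy.} The plan is to exploit that $\div$ is a homomorphism $K(X)^*\to\tdiv X(\pDiv)$, so that the $T$-semi-invariant function $f(y)\chi^u$, whose divisor is automatically $T$-invariant, has $\div(f(y)\chi^u)$ supported on the equivariant prime divisors catalogued in~(\ref{DivX}): the horizontal $D_\varrho$ and the vertical $D_{(Z,v)}=\ko{\orb}(\eta(Z),v)$. It therefore suffices to compute the single coefficient $\ord_D\big(f(y)\chi^u\big)$ for each such $D$, and this is a purely local question at the generic point of $D$. I would further split $\div(f(y)\chi^u)=\div(\pi^*f)+\div(\chi^u)$, where $\pi:\til{X}(\pDiv)\to Y$ is the canonical affine morphism underlying $\til{X}=\Spec_Y\CO_Y(\pDiv)$ and $f$ is read as a rational function on $Y$.

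First I would treat the character $\chi^u$. Localizing at the generic point $\eta(Y)$, the p-divisor retains only its tailcone, so $\til{X}$ becomes the toric variety $\Spec K(Y)[\sigma^\vee\cap M]$ over $K(Y)$; its prime divisors are the $D_\varrho$, and the order of $\chi^u$ along $D_\varrho$ is the toric pairing $\langle\varrho,u\rangle$ with the primitive generator, while $\pi^*f$ is a unit there and contributes $0$. For a vertical divisor $D_{(Z,v)}$ I would localize the base at $Z$, passing to the DVR $\CO_{Y,Z}$ whose valuation is $\ord_Z$. Over this DVR the p-divisor reduces to the single coefficient $\Delta_Z=\sum_{D_i\ni Z}\Delta_i$, and $\CO_Y(\pDiv)$ localizes to the toric-type algebra $\bigoplus_{u}\{g\in\CO_{Y,Z}\kst\ord_Z g\geq-\min\langle\Delta_Z,u\rangle\}\,\chi^u$, which is the coordinate ring of the affine toric variety attached to the cone over $\Delta_Z\times\{1\}$ inside $N_\Q\times\Q$. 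The vertex $v$ singles out the ray through $(v,1)$, and $D_{(Z,v)}$ is exactly the corresponding toric prime divisor; its valuation is computed from the primitive lattice generator $(\mu(v)v,\mu(v))$ of that ray. Pairing it against the bidegree $(u,\ord_Z g)$ of $g\chi^u$ yields $\mu(v)\big(\langle v,u\rangle+\ord_Z g\big)$, which, specialized to $g=f$, gives the claimed vertical coefficient; the summand $\mu(v)\ord_Z f$ is precisely the contribution of $\pi^*f$, reflecting that the ramification index of $\pi$ along $D_{(Z,v)}$ equals $\mu(v)$.

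Assembling the horizontal and vertical contributions reproduces the asserted formula on $\til{X}$, and passing to $X$ one simply discards the $D_\varrho$ that get contracted, exactly as in the statement. The main obstacle, and the step relying on~(\ref{DivX}) and Theorem~\ref{thm-equivPT}, is the local identification of $D_{(Z,v)}$ with a toric divisor together with the correct normalization of its valuation: one must verify that the primitive generator of the relevant ray is $(\mu(v)v,\mu(v))$ rather than $(v,1)$, since $v$ need not be a lattice point, and that the localized algebra is genuinely the toric coordinate ring over $\CO_{Y,Z}$. Once this denominator $\mu(v)$ is pinned down, the remaining pairing computations are routine.
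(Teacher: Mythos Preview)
Your proposal is correct and follows essentially the same route as the paper's proof: both reduce the computation to a local toric picture by identifying the localization of $\til{X}(\pDiv)$ near $\eta(Z)$ with the toric variety attached to $\cone(\Delta_Z,1)\subseteq N_\Q\oplus\Q$, then read off the coefficients by pairing $(u,\ord_Z f)\in M\oplus\Z$ against the primitive generators $(\varrho,0)$ and $(\mu(v)v,\mu(v))$. The only cosmetic difference is that you treat horizontal and vertical divisors via two separate localizations (at $\eta(Y)$ and at $\eta(Z)$), whereas the paper handles both simultaneously in the single formal neighborhood $(\AA^1_K,0)$, where the horizontal rays appear as $(\varrho,0)$ in the same cone.
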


Let us present a short proof that differs from the original one:

\begin{proof}
It suffices to check the formula on $\til{X}(\pDiv)$. Then everything becomes local, and we can use a formal neighborhood of $\eta(Z)\in Y$.
Thus, $(Y,\eta(Z))=(\AA^1_K,0)$ with $K=K(\eta(Z))$ being the residue field.
\\[0.5ex]
In particular, $\pDiv=\Delta\otimes[0\in\AA^1]$, which is the downgrade of a toric situation. Using \cite[\S 11]{tvar_1},
one sees that
$\wt{X}=\TV\big(\cone(\Delta,1)\subseteq N_\QQ\oplus\QQ\big)$.
Moreover, the ray $\varrho$ corresponds to $(\varrho,0)\in N\oplus\Z$,
and the vertex $v\in\Delta$ turns into the ray generated by $(v,1)$
or, equivalently, by the primitive lattice generator 
$(\mu(v)v,\mu(v))\in N\oplus\Z$.%
\\[0.8ex]
On the other hand, $f(y)\chi^u$ translates into 
$t^{\ord_Zf}\cdot\chi^u$, i.e.\ into $[u,\ord_Zf]\in M\oplus\Z$.
Pairing the latter element with the upper pairs $(\rho,0),(\mu(v)v,\mu(v)) \in N \oplus \Z$ completes the proof.
\end{proof}


\subsection{Divisor classes and global sections}
\label{globX}
Let $X=X(\pFan)$ for a complete divisorial fan
$\pFan=\sum_{p\in\PP^1} \pFan_p \otimes [p]$ on $Y=\PP^1$.
In particular,
$\deg\pFan\subsetneq|\tail\pFan|= N_\Q$.
Choose a finite set of points $\kP\subseteq\PP^1$ such that 
$\pFan_p$ is trivial ($\pFan_p=\tail\pFan$)
for $p\in\PP^1\setminus\kP$.
For a vertex $v$ of some slice of $\pFan$
we denote
by $p(v)\in\PP^1$ the point whose slice $\pFan_{p(v)}$ we have taken $v$ from. Let us define the following sets:
$$
\kV\;:=\; \{v\in \pFan_{p}(0)\kst p\in\kP\}
\hspace{1em}\mbox{and}\hspace{1em}
\kR\;:=\;\{\varrho\in(\tail\pFan) (1)\kst \varrho\cap\deg\pFan=\emptyset\}.
$$
Using the notation of (\ref{funkX}),
this leads to the definition of the following two natural maps
$$
\kQ:\kF\to\kZP\hspace{1.6em}\mbox{with}\hspace{0.7em}
e(v)\mapsto \mu(v)\,\ko{e(p(v))} 
\hspace{0.7em}\mbox{and}\hspace{0.7em}
e(\varrho)\mapsto 0\,,
\vspace{-1ex}
$$
and
\vspace{-1ex}
$$
\kphi:\kF\to N\hspace{1.6em}\mbox{with}\hspace{0.7em}
e(v)\mapsto \mu(v)v\hspace{0.7em}\mbox{and}\hspace{0.7em}
e(\varrho)\mapsto \varrho
$$
with $e(v)$ and $e(\varrho)$ denoting the natural basis vectors 
as in (\ref{c1KL}).
Since we know from (\ref{DivX}) that $\kFF\subseteq\tdiv X$,
the above Theorem~\ref{thm-principDiv} now yields a direct 
description of the class group $\cl(X)$.

\begin{corollary}
\label{cor-ClXP1}
For $X=X(\pFan)$, one has the exact sequence
$$
0\to\kZZP\oplus M \to \kFF\to \cl(X)\to 0
$$
where the first map is induced from $(\kQ,\kphi)$.
\end{corollary}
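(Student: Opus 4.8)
The goal is to identify $\cl(X)$ with the cokernel of the map $(\kQ,\kphi)\colon \kZZP\oplus M\to\kFF$ and to verify that this map is injective. Recall from (\ref{DivX}) that every $T$-equivariant prime divisor on $X=X(\pFan)$ is either vertical, $D_{(p,v)}$ for $v\in\pFan_p(0)$ with $p\in\kP$, or horizontal, $D_\varrho$ for $\varrho\in(\tail\pFan)(1)$ with $\varrho\cap\deg\pFan=\emptyset$ (the other horizontal divisors get contracted, which is exactly why they do not contribute). Thus $\tdiv X$ is the free abelian group on the symbols $\{D_{(p,v)}\}\cup\{D_\varrho\}$, and the identification $\kFF=\tdiv X$ is literally the statement that $e(v)\mapsto D_{(p(v),v)}$ and $e(\varrho)\mapsto D_\varrho$. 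The plan is to build the standard exact sequence $0\to (\text{principal }T\text{-divisors})\to \tdiv X\to\cl(X)\to 0$, then to compute the subgroup of principal $T$-divisors using Theorem~\ref{thm-principDiv}.

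**The main computation.** First I would argue that $\cl(X)$ is generated by the classes of $T$-equivariant divisors: any Weil divisor is linearly equivalent to a $T$-invariant one because the $T$-action moves a divisor within its linear equivalence class and one can average (or, more carefully, use that $X\setminus(T\text{-invariant locus})$ has complement of codimension $\geq 1$ coming from the generic fibre over $\PP^1$, so the restriction map from $T$-invariant classes is surjective). Hence $\tdiv X\surj\cl(X)$, giving exactness at $\kFF$ and at $\cl(X)$. The kernel is the group of $T$-invariant principal divisors, i.e.\ those of the form $\div(f(y)\chi^u)$ with $f(y)\chi^u\in K(\PP^1)[M]$. By Theorem~\ref{thm-principDiv}, specialized to $Y=\PP^1$,
$$
\div\big(f(y)\chi^u\big)=\sum_{\varrho\in\kR}\langle\varrho,u\rangle\,D_\varrho+\sum_{(p,v)}\mu(v)\big(\langle v,u\rangle+\ord_p f\big)\,D_{(p,v)}
$$
(with horizontal $D_\varrho$ for $\varrho\notin\kR$ omitted since they are contracted). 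Now a rational function on $\PP^1$ is determined up to scalar by its divisor of zeros and poles, and $\div f$ can be any degree-zero divisor; writing $\div f=\sum_p a_p\,[p]$ with $\sum_p a_p=0$, the datum $(\ord_p f)_p=(a_p)_p$ ranges over exactly $\kZP=\ZZ^{\kP}/\ZZ$ — wait, more precisely over the degree-zero sublattice of $\ZZ^{\kP}$ — but after dualizing and accounting for which points carry nontrivial slices this is encoded by the quotient $\ZZ^{\kP}/\Z$ in the sense that the coefficient vector $(a_p)$ lies in the kernel of summation; matching conventions with (\ref{globX}), the pair $(f\bmod\CC^*,u)$ ranges over $\kZZP\oplus M$, and the induced divisor is precisely $(\kQ,\kphi)$ applied to it. Concretely: the $D_{(p,v)}$-coefficient is $\mu(v)\langle v,u\rangle+\mu(v)\,\ord_{p(v)}f$, which is $\langle\kphi(e(v)),u\rangle+\langle\kQ(e(v)),\div f\rangle$ after the natural pairings, and the $D_\varrho$-coefficient is $\langle\varrho,u\rangle+0$; this is exactly the transpose of $(\kQ,\kphi)$, reading $\kFF$ as the dual lattice.

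**Injectivity.** Finally I would check that $(\kQ,\kphi)$ is injective, equivalently that the only $T$-invariant principal divisor that is trivial is $\div(c)$ for $c\in\CC^*$. Suppose $\div(f(y)\chi^u)=0$. The horizontal coefficients give $\langle\varrho,u\rangle=0$ for all $\varrho\in\kR$; combined with the vertical coefficients $\mu(v)(\langle v,u\rangle+\ord_{p(v)}f)=0$, one gets $\langle v,u\rangle=-\ord_{p(v)}f$ for every $v\in\kV$. Since the slices $\pFan_p$ together with $\tail\pFan$ are complete (cover $N_\Q$, as $\deg\pFan\subsetneq|\tail\pFan|=N_\Q$), the vertices $v$ of a fixed slice $\pFan_p$ together with the ray generators $\varrho\in\kR$ (and the rays meeting $\deg\pFan$) positively span $N_\Q$; using $\langle\varrho,u\rangle=0$ on $\kR$ and the fact that $\langle v,u\rangle$ must be the \emph{same constant} $-\ord_p f$ for all vertices $v$ of a single slice forces $u$ to be constant on each slice's vertex set in a way that is only consistent if $u=0$ (a nonzero $u$ is non-constant on any full-dimensional polyhedral complex that is not a single point), and then $\ord_p f=0$ for all $p$, so $f\in\CC^*$.

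**Main obstacle.** I expect the delicate point to be the precise bookkeeping of the $\PP^1$-part: translating "$\div f$ is an arbitrary principal (degree-zero) divisor on $\PP^1$" into the statement that the relevant parameter group is exactly $\kZP$ (resp.\ its dual $\kZZP$), and getting the map to come out as $\kQ$ rather than a twist of it, requires care with the convention that $\kZP=\ZZ^{\kP}/\ZZ$ uses the diagonal $\ZZ$ (the "all $a_p$ equal" direction, dual to "sum zero"). The second subtle point is the completeness/positive-spanning argument used for injectivity: one must invoke that $X(\pFan)$ is complete, hence $|\tail\pFan|=N_\Q$ and each slice is a complete polyhedral subdivision, to conclude that a linear form vanishing appropriately on vertices and $\kR$-rays must vanish identically. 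Everything else is a direct unwinding of Theorem~\ref{thm-principDiv} and the orbit decomposition of (\ref{DivX}).
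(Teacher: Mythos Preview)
Your overall strategy matches the paper's: identify the image of $(\kQQ,\kphii)$ with the $T$-invariant principal divisors via Theorem~\ref{thm-principDiv}, and read off $\cl(X)$ as the cokernel. However, there is a genuine gap in your setup.

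You assert that $\kFF=\tdiv X$, but this is false. For every $p\in\PP^1\setminus\kP$ the slice $\pFan_p=\tail\pFan$ has the single vertex $0$, and the corresponding vertical prime divisor $D_{(p,0)}$ is a perfectly good element of $\tdiv X$ that does \emph{not} lie in $\kFF$ (recall $\kV$ only records vertices over $\kP$). So $\kFF\subsetneq\tdiv X$, exactly as the paper writes in (\ref{globX}). Consequently your exact sequence $0\to(\text{principal }T\text{-divisors})\to\tdiv X\to\cl(X)\to 0$ does not immediately collapse to the claimed one. The paper's proof addresses precisely this point: one first writes the sequence with $\kP$ replaced by all of $\PP^1$, obtaining $(\Z^{\PP^1}/\Z)^*\oplus M\to\tdiv X$, and then observes that for $p\notin\kP$ the generator $D_{(p,0)}$ satisfies $\mu(0)=1$, $\kphi(e(0))=0$, and $\kQ(e(0))=\ko{e(p)}$, so the $\Z$-summand for $D_{(p,0)}$ in $\tdiv X$ maps isomorphically onto the $\Z$-summand for $\ko{e(p)}$ in $(\Z^{\PP^1}/\Z)^*$. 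These cancel in pairs, leaving the stated sequence. You flag the $\kP$-bookkeeping as the ``main obstacle'' at the end, but your argument in the second paragraph has already silently assumed it away; the cancellation step is not optional.

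Your injectivity argument is also more laboured than necessary and, as written, incomplete: knowing $\langle v,u\rangle$ is constant over the vertices of a single slice does not by itself force $u=0$ without controlling the rays $\varrho\notin\kR$, which you never do. The clean argument is simply that $X$ is complete: if $\div(f(y)\chi^u)=0$ in $\kFF$ and $\div f$ is supported on $\kP$, then the coefficients at all $D_{(p,0)}$ with $p\notin\kP$ are $\ord_p f=0$ as well, so $\div(f\chi^u)=0$ in $\Div X$; hence $f\chi^u\in\CC^*$, giving $u=0$ and $f\in\CC^*$.
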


\begin{proof}
If we dealt with the whole projective line $\PP^1$ instead of
the finite subset $\kP$, then $(\Z^{\PP^1}\hspace{-0.2em}/\Z)^*$
would represent the 
principal divisors on $\PP^1$, and the formula
$\div\big(f(y)\chi^u\big)=(\kQ,\kphi)^*\big(\div(f),u\big)$
of Theorem~\ref{thm-principDiv} would provide the exactness of the sequence.
However, for $p\in\PP^1\setminus\kP$, we have
$\pFan_p=\tail\pFan$, i.e.\ the corresponding $\Z$-summands of the first and
second place cancel each other.
\end{proof}

Let us eventually come to the description of global sections for $D\in \kFF\subseteq \tdiv$.
From \cite{tidiv} we recall the associated polyhedron
$$
\Box_D:= \conv \{u\in M_\Q\kst \langle\varrho,u\rangle \geq -\coef_\varrho D
\mbox{ for all } \varrho\in\kR\}
\vspace{0.5ex}
$$
where
$\coef_\varrho D$ denotes the coefficient of $D_\varrho$ inside $D$.
Moreover, there is a map
$\,D^*:\Box_D\to \kDiv_\Q\PP^1$ with
$\coef_p D^*(u):= 
\min\{\langle v,u\rangle +\coef_{(p,v)}D/\mu(v)\kst v\in\pFan_p(0)\}$.
As another direct consequence of Theorem~\ref{thm-principDiv}
one obtains 
$\kG\big(X,\cO_X(D)\big)(u)=\kG\big(\PP^1, \cO_{\PP^1}(D^*(u))\big)$,
cf.\ \cite{tidiv}.

\section{Finite Coverings of $\PP^1$}
\label{finiteCov}


\subsection{Finite covers with abelian group actions}\label{fCgrp}
It turns out that the p-divisors of the Cox rings under investigation do not all live on $\PP^1$, but rather on special branched coverings of it. The latter come with an action of a finite abelian group $A$ whose quotient is $\PP^1$ again. We will present these coverings in the spirit of p-divisors, yet with different and unusual coefficients. For basic facts about group cohomology used in the subsequent text see \cite[Ch VII, p.109 ff]{locFields}.

Let us point out that algebraic actions of finite abelian groups as well as tori on normal varieties should be considered as special instances
of a yet to be developed general theory of actions of diagonalizable groups.


\subsection{$A$-divisors on $\PP^1$}
\label{A-divisors}
Let $\kA, \kAA$ be finite and mutually dual abelian groups, i.e.\
$\kAA:=\gExt^1_\Z(\kA,\Z)=\gHom_\Z(\kA,\Q/\Z)$.
Consider an element $\cE\in \kDiv^0_A \PP^1 := A \otimes \kDiv^0 \PP^1$ which is is equivalent to a group homomorphism $\cE:\kAA\to \kDiv_{\Q/\Z}^0\PP^1$. Here, we denote by $\kDiv^0\PP^1$ the group of divisors on $\PP^1$ which have degree zero. In the following we will construct an $(\#A)$-fold covering $q: C\to\PP^1$ out of these data:
\\[1ex]
For $a \in \kAA$ denote by $\wt{E}_a$ an arbitrary lifting
of the $\Q/\Z$-divisor $E_a:=\langle\cE,a\rangle$
to a $\Q$-divisor of degree zero.
Then, for $a,b\in\kAA$ there exist $f_{a,b}\in K(\PP^1)^*$ with
$$
\wt{E}_a + \wt{E}_b + \div(f_{a,b}) = \wt{E}_{a+b}.
$$
These functions $f_{a,b}$ are unique up to constants,
and because of $\gH^3(\kAA,\CC^*)=0$ ($\CC^*$ is a divisible and therefore injective abelian group), they can be chosen in such a way that
$$
f_{a,b} \cdot f_{a+b,c} = f_{a,b+c} \cdot f_{b,c}.
$$
Indeed, this follows from the fact that $s_{a,b,c}:= f_{b,c}\,f_{a+b,c}^{-1}\,f_{a,b+c}\,f_{a,b}^{-1}\in\CC^*$
is a 3-cocycle, and hence arises from elements 
$s_{a,b}\in\CC^*$. Thus, we are allowed to replace the functions $f_{a,b}$ by $f_{a,b}/s_{a,b}$, and the latter finally fulfill the above multiplicative rule. Introducing the map
$$
\begin{array}{ccc}
\cO_{\PP^1}(\wt{E}_a) \otimes \cO_{\PP^1}(\wt{E}_b)
&\longrightarrow&
\cO_{\PP^1}(\wt{E}_{a+b})
\\
f\hspace{0.6em}\otimes\hspace{0.6em} g & \mapsto&
f g\, f_{a,b}^{-1}
\end{array}
$$
induces an associative multiplication on
$\cA:=\oplus_{a\in\kAA} \cO_{\PP^1}(\wt{E}_a)$ which eventually gives rise to the definition of the curve $C:=\spec_{\PP^1}\cA$.

\begin{remark}
\label{rmk:indOfChoice}
1) The structure of the algebra $\cA$ does not depend on the choice
of the rational functions $f_{a,b}$. Indeed, let $g_{a,b}$ be another choice. Then, the quotient $s_{a,b}:=g_{a,b}/f_{a,b}\in\CC^*$ represents a 2-cocycle, and $\gH^2(\kAA,\CC^*)=0$ implies the existence of $s_a\in\CC^*$
with $s_{a,b}=s_a\,s_b\,s_{a+b}^{-1}$. Thus, the 
equation $g_{a,b}^{-1}\,s_a\,s_b= s_{a+b} \,f_{a,b}^{-1}$ shows
that the multiplication with $s_a$ yields an automorphism of $\kG(\PP^1, \wt{E}_a)$ identifying the ring structures induced from the elements $f_{a,b}$ and $g_{a,b}$, respectively.
\\[.5ex]
2) Similarly, $\cA$ does not depend on the choice of the rational
liftings $\wt{E}_a$ of the $\Q/\Z$-divisors $E_a$.
Indeed, replacing them by $\wt{E}_a'=\wt{E}_a+\div(s_a)$ for rational
functions $s_a\in K(\PP^1)^*$ leads to adjusted elements
$f'_{a,b}=f_{a,b}\,s_{a+b}\,s_a^{-1} s_b^{-1}$, and thus to 
mutually compatible isomorphisms $\kG(\PP^1, \wt{E}'_a)\stackrel{s_a}{\to}\kG(\PP^1, \wt{E}_a)$.
\end{remark}

\begin{theorem}
\label{th-smoothness}
The curve $C$ constructed above is smooth.
\end{theorem}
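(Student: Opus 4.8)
The plan is to check smoothness of $C$ locally over each point $p\in\PP^1$, since smoothness is a local property and $q\colon C\to\PP^1$ is finite. Away from the support of all the $E_a$, the algebra $\cA$ is étale over $\PP^1$ (all $\wt E_a$ can be taken to be $0$ locally, and $\cA$ becomes the group algebra $\cO_{\PP^1}[\kAA]\cong\prod_{\chi\in\kA}\cO_{\PP^1}$ after the choice of the $f_{a,b}$ normalizes to constants), so there is nothing to prove there. The only interesting points are the finitely many $p$ lying in the support of some $E_a$. So the first step is to localize $\PP^1$ at such a $p$, replacing it by $\Spec\cO_{\PP^1,p}$, a DVR with uniformizer $t$ and residue field $\CC$.

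The second step is to describe $\cA$ explicitly near $p$. Since $\kAA$ is a finite abelian group, write it as a product of cyclic groups; the local divisors $E_a=\langle\cE,a\rangle\in\Div_{\Q/\Z}^0\PP^1$ at $p$ are determined by the element $\coef_p\cE\in A\otimes\QQ/\ZZ$, and after choosing lifts we may take $\wt E_a=r(a)\cdot[p]$ locally for a fixed group homomorphism $r\colon\kAA\to\QQ/\ZZ$ lifted to a set-theoretic map into $\QQ$. Concretely, if $\kAA=\ZZ/n_1\times\cdots\times\ZZ/n_k$ and the $i$th component of $\coef_p\cE$ is $b_i/n_i\bmod\ZZ$, then locally $\cA\cong\cO_{\PP^1,p}[x_1,\dots,x_k]/(x_i^{n_i}-u_it^{b_i})$ for units $u_i\in\cO_{\PP^1,p}^*$ — here $x_i$ is the generator of $\cO_{\PP^1}(\wt E_{a_i})$ corresponding to $t^{-b_i/n_i}$, and the relation encodes both the $n_i$-torsion of $E_{a_i}$ and the cocycle normalization of the $f_{a,b}$. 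By Remark~\ref{rmk:indOfChoice} this presentation is independent of the choices made.

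The third step is to observe that the global degree-zero hypothesis on $\cE$ forces, at each $p$, that $\coef_p\cE\ne 0$ (otherwise $p$ would not be in the support of any $E_a$, handled already), and more importantly that the resulting cover is connected and reduced; then smoothness follows from the classical fact that an extension of a DVR of the form $\cO[x]/(x^n-ut^b)$ with $\gcd(n,b)$ understood correctly is again regular — specifically, the normalization of such a ring is a product of DVRs, and because our $\cA$ is constructed so as to already be $\spec_{\PP^1}$ of a sheaf of algebras that is \emph{normal} (this is where one invokes that the $f_{a,b}$ were chosen compatibly, so that $\cA$ is the integral closure of $\cO_{\PP^1}$ in the function field $K(C)$, or argues directly that each $x_i^{n_i}=u_it^{b_i}$ with $t$ a uniformizer yields a regular ramified cover of the disk), each localization of $C$ is a regular local ring of dimension one, hence $C$ is smooth.

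\textbf{Main obstacle.}
The delicate point is the third step: verifying that the explicit local model $\cO_{\PP^1,p}[x_1,\dots,x_k]/(x_i^{n_i}-u_it^{b_i})$ is genuinely \emph{regular} rather than merely normal, and that the several cyclic factors do not interact to produce a non-reduced or singular fiber. One must check that the $t$-adic valuations arrange so that, after passing to a single uniformizer of $C$ above $p$, the local ring is a DVR; equivalently, that the Kummer-type cover branched only along $\{t=0\}$ with the prescribed ramification indices $n_i/\gcd(n_i,b_i)$ is smooth. This is where the hypothesis $\deg\cE=0$ and the careful cocycle normalization of Section~\ref{A-divisors} (ensuring $\cA$ is a sheaf of \emph{domains} locally, equivalently that $C$ is irreducible over each component) must be used; without it one could get a disjoint union with embedded or singular points. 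I expect the cleanest route is to reduce to the one-variable case $\cO[x]/(x^n-ut^b)$, note its normalization is $\cO[y]/(y^{n/d}-u't^{b/d})$ with $d=\gcd(n,b)$, which is visibly a DVR, and then show the construction of Section~\ref{A-divisors} produces exactly this normalization (not a proper subring) — that last identification being the real content.
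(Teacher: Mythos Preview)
Your proposed local model $\cA_p\cong\cO_{\PP^1,p}[x_1,\dots,x_k]/(x_i^{n_i}-u_it^{b_i})$ is incorrect, and this is where the argument breaks down. By construction $\cA_p$ is a free $\cO_{\PP^1,p}$-module with basis $\{s_a : a\in\kAA\}$, one generator for \emph{every} group element, not just for the chosen cyclic generators $a_i$. The subring generated by $s_{a_1},\dots,s_{a_k}$ alone is in general strictly smaller and fails to be normal. Concretely, take $\kAA=\Z/2\times\Z/2$ with $\coef_p E_{(1,0)}=\coef_p E_{(0,1)}=\tfrac12$. Your model is $\cO[x,y]/(x^2-u_1t,\;y^2-u_2t)$, two smooth branches meeting in a node over $t=0$. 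In the genuine $\cA_p$, however, there is a further generator $z:=s_{(1,1)}$; since $\coef_p E_{(1,1)}=0$ the relation for $a=b=(1,1)$ reads $z^2=\text{unit}$, while the relation for $a=(1,0)$, $b=(0,1)$ gives $tz=\text{unit}\cdot xy$. Thus $z$ is precisely the missing integral element $xy/t$ that normalises your ring, and its two values over $t=0$ separate the branches. The ``real content'' you single out at the end---that $\cA$ already coincides with the normalisation of your tensor-product model---is therefore the entire theorem, and nothing in the outline supplies it. (Neither ``$\cA$ is $\spec_{\PP^1}$ of a sheaf of algebras'' nor the cocycle condition on the $f_{a,b}$ bears on normality; the cocycle condition yields associativity only. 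The degree-zero hypothesis on~$\cE$ is likewise a global device guaranteeing the existence of the $f_{a,b}$ on~$\PP^1$ and plays no role in the local smoothness argument.)

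The paper's proof avoids the cyclic decomposition entirely. It keeps all the generators $s_a$ and works with the explicit presentation
\[
\cA_p=\CC[t,s_a\,|\,a\in\kAA]_{(t)}\big/\big(t^{\lambda(a,b)}\,s_{a+b}-\varepsilon_{a,b}(t)\,s_as_b\big),
\]
where $\lambda(a,b)\in\{0,1\}$ records the carry in $\ell(a)+\ell(b)$ for $\ell:=\coef_p\cE\in\kA=\Hom(\kAA,\Q/\Z)$. Writing $N$ for the order of the image of $\ell$, one chooses $\gamma\in\kAA$ with $\ell(\gamma)=1/N$, so that $s_\gamma$ is a candidate uniformiser. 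For each closed point $\underline c$ of the fibre the paper then writes down an explicit homomorphism $\widehat{\cO_{C,\underline c}}\to\CC\{u\}$ sending $t\mapsto u^N$ and $s_a\mapsto \eta_a(u^N)\,u^{N\ell(a)}$ for suitable units $\eta_a$, and verifies bijectivity by exhibiting an inverse $u\mapsto \eta_\gamma(t)^{-1}s_\gamma$ and checking the relations. The key combinatorial step is that $s_a$ depends, up to units in the completed local ring, only on the value $\ell(a)$---exactly the phenomenon visible in the $\Z/2\times\Z/2$ example above.
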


The proof of this theorem will be given in paragraph (\ref{smoothness}). In particular it will also allow for a detailed study of the monodromy of the covering.

On the other hand, suppose we are given a smooth projective curve $C$ together with an action of a finite abelian group $A$ such that its quotient is the projective line. Then with a little effort one can construct $A$-divisor on $\PP^1$ that recovers the whole picture.


\subsection{Cyclic coverings}
\label{cyclic}
The usual description of a cyclic $n$-fold covering of $\PP^1$ makes use of an invertible sheaf $\cL$ on $\PP^1$ together with an effective and reduced divisor $D\subseteq \PP^1$, and an isomorphism
$s: \cL^n \stackrel{\sim}{\to} \cO_{\PP^1}(-D)$, cf. e.g. \cite[Chapter 4.1]{posAG1}. Hence, the sheaf of $\cO_{\PP^1}$-algebras $\cA:=\oplus_{i=0}^{n-1} \cL^i$ obtains a multiplication
via $s:\cL^n \hookrightarrow \cO_{\PP^1}$.
If one chooses a divisor $E$ with $\cL = \cO_{\PP^1}(E)$,
then $s$ corresponds to a rational function such that
$nE +\div(s)=-D$.

This construction fits into the pattern of (\ref{A-divisors}) as
follows: Let $\kA = \ZZ/n\ZZ$. If $D$ is a divisor on $\PP^1$ with $n|\deg D$,
then this gives rise to a map $\Z/n\Z = \kAA \to \kDiv_{\Q/\Z}^0\PP^1$,
$1\mapsto \frac{1}{n}D$. Here, the ordinary divisor $E$
is used to obtain the liftings to the $\Q$-divisors
$\til{E}_{i}:= iE + \frac{i}{n}D$ for $\,i=0,\ldots,n-1$.
Finally, the functions 
$$
f_{i,j}:=\left\{\begin{array}{ll}
1 & \mbox{if } i+j\leq n-1, \mbox{ and}\\ 
s & \mbox{if } i+j\geq n
\end{array}\right.
$$
fulfill 
$\wt{E}_a + \wt{E}_b + \div(f_{a,b}) = \wt{E}_{a+b}$
as well as
$f_{a,b} \cdot f_{a+b,c} = f_{a,b+c} \cdot f_{b,c}$.

Note that the dual language is even simpler. Namely, the covering corresponds exactly to the divisor $D$ which is to be considered as an element of $\kDiv_{\ZZ/n\ZZ}^0 \PP^1$.

\subsection{Proof of Theorem \ref{th-smoothness}}
\label{smoothness}

\subsubsection{}
\label{localPicture}
Let $p \in \PP^1$ be a point with local parameter $t$, and identify $K(\PP^1)$ with $\CC(t)$. Furthermore, let $f_{a,b}=t^{k(a,b)}\keps_{a,b}$ with $\keps_{a,b}\in\CC[t]^*_{(t)}$ (i.e.\ $\keps_{a,b}(0)\neq 0$) where $k(a,b):=\ord_p f_{a,b} \in \ZZ$. Note that the units $\keps_{a,b}$ fulfill the same cocycle conditions as the functions $f_{a,b}$.
\\[1ex]
We may consider $E_a$ as a $\QQ$-divisor with coefficients in the interval $[0,1)$, i.e.\ it becomes the fractional part $\{\wt{E}_a\}$. We now define $\ell(a):=\coef_p E_a\in[0,1) \cap \QQ$ inducing a linear function $\ell:\kAA \to [0,1)\to\Q/\Z$, i.e.\ $\ell\in\kA$. Note that this is exactly the $p$-coefficient of the $\kA$-divisor $\cE$.
Moreover, we define
$$
\lambda(a,b):=\left\{\begin{array}{ll}
0 & \mbox{if } \ell(a) + \ell(b) < 1\\
1 & \mbox{if } \ell(a) + \ell(b) \geq 1,
\end{array}\right.
$$
which gives us $\ell(a) + \ell(b) = \ell(a+b)+\lambda(a,b)$. Finally we set $m(a):=\coef_p\rounddown{\wt{E}_a}$ which yields
$$
m(a) + m(b) + k(a,b) + \lambda(a,b)=m(a+b).
$$
Let us denote the local monomial generators of $\cO_{\PP^1}(\wt{E}_a)\subseteq\CC(t)$ by $s_a:=t^{-m(a)}$ ($a \in \kAA$). Then the set $\{s_a\kst a\in \kAA\}$ with $s_0=1$ becomes a basis of the stalk $\cA_p$ which is a free $\CC[t]_{(t)}$-module. Yet the multiplication of the $s_a$ in $\cA_p$ differs from the one of the monomials in $\CC(t)$:
$$
\begin{array}{rcl}
s_a \odot s_b 
&=& t^{-m(a)}\odot t^{-m(b)}\\
&=& t^{-m(a)-m(b)} f_{a,b}^{-1}\\
&=& t^{k(a,b)+\lambda(a,b)-m(a+b)}f_{a,b}^{-1}\\
&=& \keps^{-1}_{a,b}(t)\, t^{\lambda(a,b)}\, s_{a+b}.
\end{array}
$$
Thus, we obtain
$\cA_p=\CC[t,s_a \kst a\in\kAA]_{(t)}/\big(t^{\lambda(a,b)}\, s_{a+b}-\keps_{a,b}(t)\, s_as_b\big)$ with $s_0=1$.
\\[1ex]
Next we set $N:= \lcm\, \{\textnormal{denominators of } \ell(a) \kst a \in \kAA\}$, and $d(a):= N \cdot \ell(a) \in \ZZ$. It follows that $\gcd\, \{d(a) \kst a \in \kAA\} = 1$. Hence, there exist integers $c(a)$ such that $\sum_{a\in\kAA} c(a)\, d(a)=1$, and we define
$$\gamma:=\sum_{a\in\kAA} c(a)\cdot a\in\kAA.$$
Then we have
$$\ell(\gamma)=\ell\big(\sum_{a\in\kAA} c(a)\cdot a\big)
=\sum_{a\in\kAA} c(a)\,\ell(a) =1/N,$$
in other words $d(\gamma)=1$. Moreover, note that $d(a) = 0$ implies $\lambda(a,b) = 0$ for arbitrary $b \in \kAA$.
\\[1ex]
Now, consider a closed point $\underline{c}=(c_a\in\CC\kst a\in\kAA) \in C$ with $q(\underline{c})=p$. This is equivalent to the following two conditions on the coordinates of $\underline{c}$\,:
$$
c_a=0 \mbox{ for } d(a)\geq 1,
$$
and
$$
c_a\, c_b \,\keps_{a,b}(0)= c_{a+b} \neq 0
\mbox{ for } d(a)=0.
$$

It is not hard to check that both conditions imply the equations for the ideal given above. For the other direction, it suffices to prove the claim for $c_a=0$, $c_a \neq 0$ for $d(a)\geq 1$ and $d(a)=0$, respectively. Denoting the order of $a \in \kAA$ by $n_a$, we obtain the following equation for both cases:
$$
s_a^{n_a} = \big(\mbox{product of units in $\CC[t]_{(t)}$}\big)
\cdot t^\lambda \cdot s_{n_a a}
\mbox{ with }
\lambda=\left\{\begin{array}{ll}
\geq 1 & \mbox{if } d(a)\geq 1\\
0 & \mbox{if } d(a)=0
\end{array}\right.
$$
Thus, the claim follows from $s_{n_a a}=s_0=1$ which also yields $c_{n_a a}=1$.

\subsubsection{}
\label{localIso}
Having provided the necessary tools in the previous paragraph, we are now ready to define the following map:
$$
\xymatrix@R=0.0ex@C=2.2em{
\cA_p\;=\;\CC[t,s_a \kst a\in\kAA]_{(t)}/
\big(t^{\lambda(a,b)}\, s_{a+b}-\keps_{a,b}(t)\, s_as_b\big)
\ar[r]^-{\Phi_\eta} & 
\CC\{u\}\supseteq \CC[u]_{(u)}\\
{ }\\
t \ar@{|->}[r] & u^N\hspace{4em}\\
s_a \ar@{|->}[r] & \eta_a(u^N)\, u^{d(a)}\hspace{4em}
}
$$
with $\eta_a(u)\in \CC\{u\}^*$, such that
$\eta_a(0)=c_a$ for $d(a)=0$, and for all $a,b\in\kAA$
$$
\eta_a(u)\,\eta_b(u) \keps_{a,b}(u)=\eta_{a+b}(u).
$$

The existence of the functions $\eta_a(u)$ follows from the divisibility of the abelian group $\CC\{u\}^*$. The latter property is also the reason for why we have to enlarge the ring $\CC[u]_{(u)}$ to $\CC\{u\}^*$ (corresponding to the non-rationality of the curve $C$). Note that $\Phi_\eta(s_{\gamma})= \eta_{\gamma}(u^N)\, u$.
Observe that by choosing $\wt{E}_0 = 0$, we can arrange that $f_{a,0}=\keps_{a,0}=1$ and therefore $\eta_0(u)=1$.

$\Phi_\eta$ maps $t$ together with all differences $(s_a-c_a)$ into the ideal
$(u)\subseteq \CC\{u\}$, i.e.\ $\Phi_\eta$ induces a ring homomorphism
between the completions
$$
\hat{\Phi}_\eta:\widehat{\cO_{C,\underline{c}}}=
\widehat{(\cA_p)_{\underline{c}}}\longrightarrow\CC\{u\}.
$$
Since we have $\Phi_\eta(s_{\gamma}) = \eta_{\gamma}(u^N)\, u$, we conclude that $\hat{\Phi}_\eta$ is surjective.
\\[1ex]
We claim that $\hat{\Phi}_\eta$ is an isomorphism. To show that $\hat{\Phi}_\eta$ is injective we construct a map $\hat{\Psi}_\eta: \CC\{u\} \to \widehat{(\cA_p)_{\underline{c}}}$ such that $\hat{\Psi}_\eta \circ \hat{\Phi}_\eta \equiv \id$. Using the identity $\keps_{a,b} = \frac{\eta_{a+b}}{\eta_a \eta_b}$, we transform our initial relations $t^{\lambda(a,b)}\, s_{a+b}-\keps_{a,b}(t)\, s_as_b = 0$ in $\cA_p$ into 
$$
\eta_{a+b}(t)^{-1}s_{a+b}\,t^{\lambda(a,b)} - \eta_a(t)^{-1}s_a \cdot \eta_b(t)^{-1}s_b = 0\,.
$$
To simplify our notation we substitute the $s_a$ by the new variables $t_a:= \eta_a(t)^{-1}s_a$ which transforms the upper relation into 
$$
t_{a+b}\,t^{\lambda(a,b)} - t_a t_b = 0.
$$
Then we define the map $\hat{\Psi}_\eta: \CC\{u\} \to \widehat{(\cA_p)_{\underline{c}}},\, u \mapsto t_{\gamma}$, and by composition we obtain
$$
\big(\hat{\Psi}_\eta \circ \hat{\Phi}_\eta\big)(t) = t_{\gamma}^N, \textnormal{ and } \,\big(\hat{\Psi}_\eta \circ \hat{\Phi}_\eta\big)(t_a) = t_{\gamma}^{d(a)}\,.
$$
Thus, it remains to show that $t=t^N_{\gamma}$ and $t_a = t_{\gamma}^{d(a)}$ in the ring $\widehat{(\cA_p)_{\overline{c}}}$. This will be done in (\ref{finalProof}).

\subsubsection{}
\label{intermediateProof}
In an intermediate step, we prove that $d(a) = d(b)$ implies $t_a = t_b$ in $\widehat{(\cA_p)_{\overline{c}}}$. First let us restrict ourselves to those elements for which $d(\cdot) = 0$. Since $d(0) = 0$ and $t_0 = 1$ by definition, we have to show that $t_a = 1$ in $\widehat{(\cA_p)_{\overline{c}}}$. Yet, this does not involve variables $t_{a'}$ with $d(a') \geq 1$. Hence, it suffices to prove the identity inside the local ring $\big(\CC[t,t_a \kst d(a) = 0]/(t_{a+b} - t_a t_b)\big)_{\underline{c}}$\,, where $\underline{c}$ is the point that corresponds to the maximal ideal $(t,t_a-1)$. Observe that 
$$
\CC[t,t_a \kst d(a) = 0]/(t_{a+b}-t_a t_b) = \CC[t][\ker l \subseteq \kAA]
$$
is a commutative group algebra. Furthermore, denoting the order of $\ker l$ by $n$, we have the following equalities in $\CC[t][\ker l]_{\underline{c}}$\,:
$$
(t_a -1) \cdot (\textnormal{unit}) = \prod_{\xi_k \in \mu_n} (t_a - \xi_k) = t_a^n -1 = t_0 - 1 = 0\,. 
$$

Consider now elements $a \in A$ with $d(a) \geq 1$. Assume that $d(a) = d(b)$, and set $c: = a-b$. From $d(c)=0$ we deduce that $\lambda(b,c) = 0$ and $t_c = 1$. Thus, the relation $t_a-t_bt_c=0$ from the ideal proves our claim.

\subsubsection{}
\label{finalProof}
Let us now come back to the two equations given at the end of (\ref{localIso}). By the relations of the $t_a$, we have that $t_{k\gamma} = t_\gamma^k$ for $0\leq k <N$ and $t\cdot t_{N\gamma} = t_{\gamma}^N$. The first equality in (\ref{localIso}) now follows from $t_{N\gamma}=1$ ($d(N\gamma) = 0$). The second one holds due to $d(a) = d\big(d(a)\cdot\gamma\big)$ and the fact that $t_a = t_{d(a)\gamma} = t_{\gamma}^{d(a)}$ which follows from (\ref{intermediateProof}). This completes the proof of Theorem~\ref{th-smoothness}.

\section{The P-Divisor of the Cox Ring}\label{pDivCox}


\subsection{The map $(\kQ,\kphi)$}
\label{Qphi}
Let $X=X(\pFan)$ be given by a complete divisorial fan
$\pFan=\sum_{p\in\PP^1} \pFan_p \otimes [p]$ on $\PP^1$.
Recall that the map $\kQ$ from (\ref{globX}) becomes surjective after
tensoring with $\Q$. Over $\Z$, however, we define the following two lattices $\tN:=\ker \kQ\subseteq \kF$, $\kL:=\im\kQ\subseteq\kZP$, and we denote by $A$ the finite abelian group
$A:=\coker\kQ$. Note that all these groups can be assembled in the following exact sequences
$$
\;0 \to \tN \to \kF \stackrel{\kQ}{\to} \kZP \to \kA \to 0
\hspace{1.3em}\mbox{and}\hspace{1.3em}
\;0 \to \kL \to \kZP \to \kA\to 0.
$$
Furthermore, we choose a section $\ks:\kL \hookrightarrow \kF$, and by abuse of notation, we denote its rational extension $\ks:\kZP\to\kFQ$ by the very same letter.

Although the next lemma is already a consequence of Corollary~\ref{cor-ClXP1}, we believe that the following direct and combinatorial proof sheds some additional light on the whole situation.

\begin{lemma}
\label{lem-Qsurj}
The map $(\kQ,\kphi)_\Q:\kFQ\to(\kQP)\oplus N_\Q$ is surjective.
\end{lemma}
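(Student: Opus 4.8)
The plan is to show surjectivity of $(\kQ,\kphi)_\Q$ by producing, for every element in the target $(\kQP)\oplus N_\Q$, an explicit preimage built from the basis vectors $e(v)$ and $e(\varrho)$ of $\kFQ$. Recall the images: $\kQ$ sends $e(v)\mapsto \mu(v)\,\ko{e(p(v))}$ and $e(\varrho)\mapsto 0$, while $\kphi$ sends $e(v)\mapsto \mu(v)v$ and $e(\varrho)\mapsto\varrho$. So the coordinate $e(\varrho)$, $\varrho\in\kR$, is ``free'' in the second component: over $\Q$ the rays $\varrho\in\kR$ together with the vertices $v\in\kV$ span $N_\Q$, because $\kpR + \Q_{\geq 0}\cdot\prod_p\Delta_p^c$ has full-dimensional tailcone structure — more precisely, $\deg\pFan\subsetneq|\tail\pFan| = N_\Q$ forces the slices to cover $N_\Q$. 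I would first record this spanning fact, then argue coordinate by coordinate.

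First I would handle the $\kQP$-component. Fix $\bar\alpha\in\kQP = \QQ^{\kP}/\QQ$ with a lift $\alpha = (\alpha_p)_{p\in\kP}\in\QQ^\kP$. For each $p\in\kP$ pick any vertex $v_p\in\pFan_p(0)$ (non-empty since $\pFan_p$ is a non-trivial slice), and set $x := \sum_{p\in\kP} \frac{\alpha_p}{\mu(v_p)}\,e(v_p)\in\kFQ$. Then $\kQ_\Q(x) = \sum_p \alpha_p\,\ko{e(p)} = \bar\alpha$. This takes care of the first component up to the contribution $\kphi_\Q(x) = \sum_p \alpha_p v_p \in N_\Q$ that has been introduced in the second component. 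Now, using the spanning fact above, choose $y\in\kFQ$ supported on the $e(\varrho)$, $\varrho\in\kR$, and on differences that lie in $\ker\kQ_\Q$ — e.g. combinations $e(v)/\mu(v) - e(v')/\mu(v')$ with $p(v)=p(v')$, which $\kQ_\Q$ kills — so that $\kphi_\Q(y)$ equals any prescribed element of $N_\Q$ while $\kQ_\Q(y)=0$. Adding such a $y$ with $\kphi_\Q(y) = n - \kphi_\Q(x)$ for the desired target $(\bar\alpha,n)$ finishes the construction: $(\kQ,\kphi)_\Q(x+y) = (\bar\alpha, n)$.

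The main obstacle — really the only nontrivial point — is justifying that the images $\{\mu(v)v : v\in\kV\}\cup\{\varrho : \varrho\in\kR\}$, modulo the relations available in $\ker\kQ_\Q$, still span all of $N_\Q$; in other words, that removing rays meeting $\deg\pFan$ from consideration and having only one ``free'' basis vector per ray does not cost full rank. I would argue this from $|\tail\pFan| = N_\Q$: the tailfan is complete, so its rays span $N_\Q$; rays of $\tail\pFan$ disjoint from $\deg\pFan$ are exactly $\kR$, and the remaining rays $\varrho$ (meeting $\deg\pFan$) arise as tailcones of the unbounded polyhedra in the slices, hence $\varrho = \lim_{t\to\infty}(v + t w)$ direction is realized as a limit of vertices $v\in\pFan_p(0)$ plus a recession direction already visible in the $\Delta_p^c$ data — so even these directions are in the $\QQ$-span of the $\mu(v)v$. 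Once full rank is established, the coordinate-by-coordinate construction above goes through verbatim. Alternatively, one may simply invoke Corollary~\ref{cor-ClXP1}: exactness of $0\to\kZZP\oplus M\to\kFF\to\cl(X)\to 0$ dualizes, after $\otimes\,\QQ$, to the surjectivity asserted here; but as the authors note, the direct argument is the more illuminating one.
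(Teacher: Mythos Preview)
Your overall strategy matches the paper's: first observe that $\kQ_\Q$ is surjective because every slice $\pFan_p$ has at least one vertex, then show that $\kphi_\Q$ restricted to $\ker \kQ_\Q$ surjects onto $N_\Q$. You also correctly isolate the only nontrivial point, namely that the rays $\varrho\notin\kR$ must still lie in the image of $\kphi_\Q|_{\ker\kQ_\Q}$.

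However, your justification of that key point does not work as written. Saying that a ray $\varrho\notin\kR$ ``arises as a tailcone of an unbounded polyhedron in the slices'' and is ``realized as a limit of vertices plus a recession direction'' does not place $\varrho$ in the $\Q$-span of the $\mu(v)v$, nor in the image of $\kphi_\Q|_{\ker\kQ_\Q}$: a recession direction of a polyhedron is in general \emph{not} in the affine or linear span of its vertices, and limits of vertices give you vertices, not new directions. The paper's argument is sharper and uses precisely the condition defining $\kR$: for a p-divisor $\pDiv\in\pFan$ one has $\deg\pDiv=\sum_p\conv\pDiv_p(0)+\tail\pDiv$, so if $\varrho\notin\kR$ then $\varrho$ actually meets the compact part $\sum_p\conv\pFan_p(0)$ away from the origin. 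A point $\lambda\varrho=\sum_p x_p$ with $x_p=\sum_{v\in\pFan_p(0)}\lambda_v v$, $\sum_v\lambda_v=1$, is then the image under $\kphi_\Q$ of $\sum_{p}\sum_{v}\tfrac{\lambda_v}{\mu(v)}e(v)$, and this element lies in $\ker\kQ_\Q$ because $\sum_p\ko{e(p)}=0$ in $\kQP$. That is the missing mechanism: it is the relation ``one vertex per slice sums to zero under $\kQ$'' combined with the \emph{degree} description of $\deg\pDiv$, not a limit argument, that produces the rays outside $\kR$. Your fallback via Corollary~\ref{cor-ClXP1} is of course valid, but the direct proof needs this correction.
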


\begin{proof}
Since $\deg(\pDiv)=\sum_p\conv \pDiv_p(0) + \tail(\pDiv)\subsetneq \tail(\pDiv)$ for a single $\pDiv \in \pFan$, every ray $\varrho \in \tail(\pDiv)(1)$ either belongs to $\kR$ (meaning that $\varrho \cap \deg\pDiv = \emptyset$), or $\varrho$ intersects $\sum_p\conv \pDiv_p(0)$. Thus, every ray $\varrho \in \tail(\pFan)(1)$ either belongs to $\kR$ or it intersects $\sum_p\conv \pFan_p(0)$ away from the origin.
This fact means that non-zero elements of each ray of the tailfan
occur in the image of the map $\kphi_\Q: Q^{-1}(0) \to N_\Q$, i.e.\
it is surjective.

On the other hand, every slice contains at least one vertex which implies that $\kQ_\Q:\Q^{\kV}\to\kQP$ is also surjective.
\end{proof}


\subsection{The main polyhedral objects}
\label{somePol}
We briefly recall the polyhedral objects we defined in (\ref{c1KL}), namely the polytopes
$$
\Delta_p^c = \conv\{e(v)/\mu(v)\kst v\in \pFan_p(0)\}
\subseteq Q^{-1}(\ko{e(p)})
\subseteq\kFQ,
$$
and the polyhedral cone
$$
\sigma \;=\; \Q_{\geq 0}\cdot\prod_{p\in\kP} \Delta_p^c 
\;+\; \kpR 
\;=\;Q^{-1}(0)\cap \kpVR
\;\subseteq\;\tN_\Q.
$$
From these data we build
$$
\begin{array}{rcl}
\Delta_p & :=& \Delta_p^c + \sigma \;=\; 
Q^{-1}(\ko{e(p)})\cap \kpVR
\hspace{2em}\mbox{and}
\\[1.0ex]
\wt{\Delta}_p &:=& \Delta_p - \ks(\ko{e(p)})\;\subseteq\; \tN_\Q.
\end{array}
$$
Moreover, following Section~\ref{finiteCov}, the cokernel $\pi:\kZP\to \kA$ of $\kQ$ from (\ref{Qphi}) yields an $A$-divisor of degree zero, i.e.\ a finite covering $q:C\to\PP^1$. Note that for each $c\in C$
the ramification index $e_c \in \Z_{\geq 1}$ only depends on the image $p=q(c) \in \PP^1$.
While $p \in \PP^1 \setminus \kP$ leads to $e_c=1$, the ramification indices of points in the fibers over elements $p \in \kP$ equal the order of $\pi(\ko{e(p)}) \in \kA$.

\begin{theorem}
\label{th-mainTh}
$\Cox(X)$ corresponds to the p-divisor
$\cD_{\Cox} = \sum_{c\in C} e_c\cdot \wt{\Delta}_{q(c)}\otimes [c]$ on $C$.
\end{theorem}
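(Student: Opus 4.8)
The strategy is to reduce the statement to Corollary~\ref{cor-ClXP1} together with the description of global sections from (\ref{globX}), and then to reinterpret the resulting graded algebra as the global sections of $\cO_C(\cD_{\Cox})$. Concretely, the $\Cl(X)$-grading on $\Cox(X)$ combined with the residual $T$-action makes $\Spec\Cox(X)$ a variety with an effective action of the diagonalizable group $\Hom(\kFF,\CC^*)$ — recall from Corollary~\ref{cor-ClXP1} that $\kFF$ surjects onto $\Cl(X)$ with kernel the image of $(\kQ,\kphi)$. Dividing out the torsion of this group amounts to passing from the character lattice $\kFF$ to a finite-index sublattice, equivalently to replacing $\kF$ by $\tN=\ker\kQ$ and the ambient base $\PP^1$ by the covering $q\colon C\to\PP^1$ produced in Section~\ref{finiteCov} from the $A$-divisor attached to $\pi\colon\kZP\to\kA=\coker\kQ$. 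So the first step is to set up the commutative diagram (announced in the paper as (\ref{torsionDiagram})) relating $\kF,\kZP,\tN,\kL,M,N$ and to check that, after this torsion removal, $\Spec\Cox(X)$ acquires an effective torus action whose one-parameter lattice is $\tN$ and whose Chow quotient is exactly $C$.

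Next I would compute the p-divisor directly from the global section formula. By (\ref{globX}), for $D\in\kFF$ one has $\kG(X,\cO_X(D))(u)=\kG(\PP^1,\cO_{\PP^1}(D^*(u)))$ where $D^*(u)$ has $p$-coefficient $\min\{\langle v,u\rangle+\coef_{(p,v)}D/\mu(v)\}$; summing the Cox ring over all $D$ and regrading by $\tN$ rather than by $\kFF$ turns this into an $\tN$-graded algebra whose homogeneous pieces are sections of line bundles on $C$. The point is to recognize the polyhedra $\wt\Delta_{q(c)}$: the vertices $e(v)/\mu(v)$ enter precisely because the local uniformizer at $c\in C$ over $p$ is an $e_c$-th root of the one at $p$, which is what multiplies the coefficient by $e_c$ and accounts for the factor $e_c\cdot\wt\Delta_{q(c)}$. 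The rays of $\kR$ contribute the summand $\kpR\subseteq\sigma$, i.e. the horizontal part, exactly as in the toric Cox construction of (\ref{toricCox}). The shift by $-\ks(\ko{e(p)})$ is the polyhedral principal-divisor ambiguity from (\ref{pDivDef}): different choices of the section $\ks$ of $\kL\hookrightarrow\kF$ differ by such principal divisors, so $\cD_{\Cox}$ is well-defined only up to the shifts mentioned in Theorem~\ref{th-mainThIntro}.

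The verification that $\cD_{\Cox}$ really is a p-divisor uses the complexity-one criterion from (\ref{non-aff}): one must check $\deg\cD_{\Cox}\subsetneq\tail\cD_{\Cox}$, where $\tail\cD_{\Cox}=\sigma$ and $\deg\cD_{\Cox}=\sum_{c\in C} e_c(\wt\Delta_{q(c)})$. Here the genus of $C$ is irrelevant because $C$ is still a smooth projective curve and the degree computation is local; the inequality follows from $\deg\pFan\subsetneq|\tail\pFan|$ for the original divisorial fan, transported along $(\kQ,\kphi)$. Finally I would invoke Theorem~\ref{thm-equivPT} (the p-divisor $\leftrightarrow$ affine $T$-variety dictionary) to conclude $X(\cD_{\Cox})=\Spec\kG(C,\cO_C(\cD_{\Cox}))=\Spec\Cox(X)$, matching gradings on both sides.

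\textbf{Main obstacle.} I expect the genuinely delicate point to be the torsion bookkeeping: passing from the diagonalizable-group action (with character group having torsion) to an honest torus action forces the base change to $C$, and one must check that the local ramification data $e_c$ computed in Section~\ref{finiteCov} match exactly the denominators appearing when the section formula of (\ref{globX}) is pulled back to $C$ — i.e. that the order of $\pi(\ko{e(p)})$ in $\kA$ is simultaneously the ramification index of $q$ over $p$ and the integer by which $\wt\Delta_{q(c)}$ must be scaled. Keeping the cocycle choices from (\ref{A-divisors}) compatible with the chosen section $\ks$ throughout this comparison, so that the two ``up to shift / up to automorphism'' ambiguities are the same ambiguity, is where the real care is needed; everything else is a diagram chase plus the local toric computation already carried out in the proof of Theorem~\ref{thm-principDiv}.
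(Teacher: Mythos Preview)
Your plan follows the same architecture as the paper's proof: set up the big diagram of lattices (\S\ref{torsionDiagram}), use Corollary~\ref{cor-ClXP1} and the section formula of (\ref{globX}) to express $\Cox(X)$ as a direct sum of $\kG(\PP^1,\cO_{\PP^1}(D^*(0)))$, and then identify this with $\kG(C,\cO_C(q^*\pDiv))$ where $\pDiv=\sum_p\wt\Delta_p\otimes[p]$. Your ramification heuristic is also correct: $q^*\pDiv=\cD_{\Cox}$ because $q^*[p]=\sum_{c\mapsto p}e_c[c]$, and the paper closes exactly this way.

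There is, however, one genuine step you have not anticipated, and your ``main obstacle'' paragraph misdiagnoses where the work lies. After reducing to $M$-degree~$0$ and choosing representatives via a \emph{set-theoretic} section $\psi:\kAA\hookrightarrow\kLL$ (not the linear section $\ks$, which only handles the $\tM$-part), the paper packages the result as a sheaf
\[
\CoxSheaf(X)=\bigoplus_{w\in\tM,\;a\in\kAA}\cO_{\PP^1}\big(\pDiv(w)+\psi(a)\big)
\]
on $\PP^1$, and then shows that the $\kAA$-summand is \emph{literally} $q_*\cO_C$ from Section~\ref{finiteCov} by taking $\wt E_a:=\psi(a)$. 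This gives a sandwich
\[
\CoxSheaf(X)\;\supseteq\;\cO_{\PP^1}(\pDiv)\otimes q_*\cO_C\;\subseteq\;q_*\cO_C(q^*\pDiv),
\]
with equality only in \emph{saturated} degrees $w$ (those with $\min\langle\wt\Delta_p,w\rangle\in\Z$). The identification for all $w$ then follows because both outer sheaves are normal. Your plan to ``regrade and recognize'' the sections directly on $C$ would require checking this degree by degree, and the compatibility you flag (cocycles versus $\ks$) is not the obstruction; the point is rather that round-down of $\Q$-divisors does not commute with tensor product or pullback, and normality is what lets you bypass that. Two minor slips: the grading group is $\tM$, not $\tN$; and the properness check $\deg\cD_{\Cox}\subsetneq\sigma$ is treated in the paper as a side remark (Remark~\ref{rmk:properness}), not as part of the identification.
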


The rest of this section is devoted to the proof of this
theorem. Clearly, in the case that $\cl(X)$ is torsion free, the group $\kA$ is trivial. Hence, the map $q$ is an isomorphism, and Theorem~\ref{th-mainTh} implies Theorem~\ref{th-mainThIntro} from the introduction. The shift mentioned there is the same shift as in the above definition of $\wt{\Delta}_p$.

\begin{remark}
\label{rmk:properness}
The polyhedral divisor defined in Theorem~\ref{th-mainTh} has a complete locus. It is proper since $\deg \cD_{\Cox}$ is properly contained in the tailcone, as can easily be seen from the definition (cf. also \cite[Example 2.12]{tvar_1}).

Moreover, the handy description of $\cD_{\Cox}$ makes it possible to sometimes quickly check some of the singularity criteria for $X(\cD_{\Cox})$ presented in \cite[Section 5]{normalSing}.
\end{remark}


\subsection{The big diagram}
\label{torsionDiagram}
Dualizing the exact sequences from (\ref{Qphi}), we obtain maps like
$\kss:\kFF\to \kLL\subseteq \kQQP$,
a torsion free lattice $\tM:=\gHom(\tN,\Z)$, and the dual
finite abelian group $\kAA:=\gHom_\Z(A,\Q/\Z)=\gExt^1_\Z(A,\Z)$
as outlined in (\ref{A-divisors}).
The whole picture may be visualized in the following commutative diagram:
$$
\xymatrix@R=2.5ex@C=1.2em{
&0 &&& 0 &&& 0\\
&
\kAA\, \ar@{^(->}[rrr]\ar[u]&&&
\cl(X) \ar@{=}[rrr]\ar[u] &&&
\cl(X)\ar[u]\\
&&&&&0\\
0\ar[r]&
\kLL\ar[uu]\ar[rrr]&&&
\kFF\ar[uu]\ar[rr]^-{\kRR}
\ar@/_1.1pc/[lll]_-{\kss}&&
\tM\ar[rr]\ar[lu]
\ar@/^1.1pc/[ll]_-{\kt}&&
0\\
0\ar[rr]&&
\kZZP\ar[rr]^-{\kQQ}\ar@{_(->}[lu]&&
\kFF\ar@{=}[u]\ar[rrr]^-{\ktRR}&&&
\ttM\ar[uuu]\ar[r]\ar[lu]_-{\pi}\ar@{=>}[ru]&
0\\
&&&&&&&&\kAA\ar[lu]\\
0\ar[r]&
\kZZP\ar[uuu]\ar[rrr]\ar@{=}[ruu]&&&
\kZZP\oplus M\ar[rrr]\ar[uu]^-{\kQQ}_-{\kphii}
\ar@/^0.8pc/[uurrr]^-{\Phi_1}\ar@/_0.8pc/[uurrr]_-{\Phi_2} &&&
M\ar[r]\ar[uu]^-\beta&
0 & 0\ar[lu]\\
&0\ar[u]&&0\ar@{=>}[ru]&0\ar[u]&&&0\ar[u]
}
$$
The row containing $\kLL$ and $\tM$ is an exact sequence of free abelian groups. Since $\kLL$ is a saturated sublattice, we have that
$\kLL \cap \big(\kZZP \oplus M\big) = \kZZP$
within $\kFF$. In particular, $\kAA \to \cl(X)$ is injective.
Observe that if $\ttM$ denotes the cokernel of $\kQQ$, then
we even know that 
$$
\kAA=\Tors(\ttM)\subseteq\Tors\big(\cl(X)\big).
$$
Hence, if $\cl(X)$ is torsion free the finite groups $\kAA$ and $\kA$ become trivial. While the maps $\kRR$, $\ktRR$, and $\pi$ are explained by the diagram and the remarks made before, the map $\kt$ is supposed to be the section induced from $\kss$, i.e.\ $\kQQ \kss + \kt \kRR = \id$. We will add some details to the remaining maps $\Phi_1,\Phi_2$ appearing in the lower right corner in (\ref{ttMCox}).


\subsection{The $\ttM$- and $\tM$-grading of $\Cox(X)$}
\label{ttMCox}
Let $f:\kZZP\to K(\PP^1)^*$, $E\mapsto f_E$ be a linear map
such that $\div(f_E)=E$ for $E\in\kZZP \subseteq \kDiv^0\PP^1$. We can then deduce from Theorem~\ref{thm-principDiv} that the map
$(f,\chi):(E,u)\mapsto f_E(y)\chi^u\in K(X)$ is a lifting of
$$(\kQQ,\kphii):\kZZP\oplus M \to \kFF\subseteq\tdiv X.$$
Now, Corollary~\ref{cor-ClXP1} states that the latter map gives a presentation of $\cl(X)$. So we are exactly in the setting of \cite[Section 2]{tvarcox} used for the definition of the Cox ring under the presence of torsion in the class group. In particular,
$$
\Cox(X) = \oplus_{D\in\kFF} \kG(X,\,\cO_X(D))\;
\big/
\big(1-f_E\chi^u\kst (E,u)\in \kZZP\oplus M\big).
$$
The $T$-action on $X$ induces an $M$-grading of all vector spaces $\kG(X,\,\cO_X(D))$. On the whole, the sum $\oplus_{D\in\kFF} \kG(X,\,\cO_X(D))$ admits a $(\kFF\oplus M)$-grading.

In addition, to make the ideal 
$\big(1-f_E\chi^u\kst (E,u)\in \kZZP\oplus M\big)$ homogeneous,
we have to set the degrees coming from
$\kZZP \oplus M \hookrightarrow \kFF \oplus M$ equal to zero. Since this embedding is represented by the matrix
$$
\left(\begin{array}{@{}c@{\;}c@{}}
\kQQ & 0 \\ \kphii & \id_M
\end{array}\right),
$$
and since its two columns correspond to the two paths
$\Phi_1,\Phi_2$ in the big diagram of (\ref{torsionDiagram}),
the corresponding exact sequence
$$
0 \to \kZZP\oplus M \to \kFF\oplus M \to \ttM \to 0
$$
shows that $\Cox(X)$ carries a natural $\ttM$-grading.
To avoid torsion in the grading group, we downgrade
$\Cox(X)$ via $\pi:\ttM\surj\tM$.


\subsection{Relating the polyhedra $\Delta_p$ and $\wt{\Delta}_p$ to the Cox sheaf}
\label{usePol}
The multiplication with $\chi^{-u}$ induces isomorphisms
between the graded pieces of the above spaces of sections
$$
\chi^{-u}:\kG(X,\cO_X(D))(u)\stackrel{\sim}{\to}
\kG(X,\cO_X(D+\kphii(u)))(0).
$$
In particular, to describe $\Cox(X)$ we may forget all
non-zero degrees $u\in M$:
$$
\begin{array}{rcl}
\Cox(X) &=& 
\oplus_{D\in\kFF,\,u\in M} \kG\big(X,\cO_X(D)\big)(u)\,
\big/ \big(1-f_E\chi^u \kst E\in \kZZP\big)
\\[0.5ex]
&=& \oplus_{D\in\kFF} \kG\big(X,\cO_X(D)\big)(0\in M)\,
\big/ \big(1-f_E \kst E\in \kZZP\big).
\end{array}
$$
Using the notation and the remarks at the end of (\ref{globX})
in the first place, and then our knowledge about the vertices of 
$\Delta_p^c$ (and hence $\Delta_p$) from (\ref{somePol}), we can continue with
$$
\begin{array}{rcl}
\Cox(X) &=& 
\oplus_{D\in\kFF} \kG\big(\PP^1, D^*(0)\big)\,
\big/ \big(1-f_E \kst E\in \kZZP\big)
\\[0.5ex]
&=& \oplus_{D\in\kFF} 
\kG\big(\PP^1, \sum_p \min\langle  \Delta_p, D\rangle [p]\big)\,
\big/ \big(1-f_E \kst E\in \kZZP\big).
\end{array}
$$
Note that multiplication with the elements $f_E$ yields isomorphisms 
$$\kG(X,\cO_X(D))(0)\stackrel{\sim}{\to}
\kG(X,\cO_X(D-\kQQ(E)))(0).$$ 
Thus, exactly one summand per fine degree from $\ttM$ is needed in the previous direct sum. To make such a choice, we use an arbitrary set theoretical section $\psi: \kAA \hookrightarrow \kLL \subseteq \kFF$ of $\kLL\surj\kAA$ in the following way:

Let $w\in\tM$. Then $\kt(w) \in \kFF$ is a divisor which represents the single preimage $\ktRR\kt(w)\in\ttM$ of $w$ via $\pi$.
We deduce that $\pi^{-1}(w)=\ktRR\kt(w)+ \kAA\subseteq \ttM$,
and these elements are represented by the divisors
$D:=\kt(w) + \psi(a)\subseteq\kFF$ for $a\in\kAA$.
\\[1ex]
Now we use the orthogonality relations
$\langle \tN,\kLL\rangle=\langle\kL,\tM\rangle=0$
to obtain the following equalities:
$$
\begin{array}{rcl}
\min\langle  \Delta_p, \kt(w)+\psi(a)\rangle
&=&
\min\langle  \wt{\Delta}_p, \kt(w)+\psi(a)\rangle + 
\langle \ks(\ko{e(p)}),\, \kt(w)+\psi(a)\rangle
\\[0.5ex]
&=&
\min\langle  \wt{\Delta}_p, w\rangle + 
\langle \ko{e(p)},\, \kss\kt(w) +\kss(\psi(a))\rangle
\\[0.5ex]
&=&
\min\langle  \wt{\Delta}_p, w\rangle + 
\langle \ko{e(p)},\, \psi(a)\rangle.
\end{array}
$$
Since we have $\ko{e(p)}\in\Z^p/\Z\subseteq\Q^p/\Q=\kL_\Q$,
we know that $\langle \ko{e(p)},\, \psi(a)\rangle\in\Q$.
Thus, defining 
$$
\pDiv:=\sum_p \wt{\Delta}_p\otimes [p]
$$
and identifying $\psi(a)\in\kLL\subseteq\kLL_\Q=\kQQP$
as the $\Q$-divisor 
$\sum_p \langle \ko{e(p)},\, \psi(a)\rangle\, [p]$
on $\PP^1$,
we obtain $\Cox(X)$ as the global sections of the sheaf
$$
\;\CoxSheaf(X):= \oplus_{w\in\tM,\,
a\in\kAA}\cO_{\PP^1}\big(\pDiv(w)+\psi(a)\big).
$$
Observe that the ring structure of this sheaf not only makes use of the relations $\pDiv(w)+\pDiv(w')\leq\pDiv(w+w')$ but also of the rational functions $f_{\psi(a)+\psi(b)-\psi(a+b)}$ obtained from $\psi(a)+\psi(b)-\psi(a+b)\in \kZZP \subseteq \kLL$ for $a,b\in\kAA$.


\subsection{The covering}
\label{useCov}
The covering $q:C\to\PP^1$ is induced from the map $\pi:\kZP\to\kA$ which is the cokernel of $\kQ$. This means that the elements $a\in\kAA=\gHom(A,\Q/\Z)$  provide  $\Q/\Z$-divisors $E_a = \sum_{p\in\PP^1}(a\pi)(\ko{e(p)})\,[p]$.
According to (\ref{A-divisors}) the sheaf $\cA=q_*\cO_C$ is then
built from their liftings to some $\wt{E}_a\in\kDiv_{\Q}^0\PP^1$.
\\[1ex]
Moreover, the fact that $\psi(a)\in\kLL$ maps to $a\in\kAA$ leads to the following commutative diagram with exact rows:
$$
\xymatrix@R=3.0ex@C=2.2em{
0 \ar[r] & \kL \ar[r]\ar[d]^-{\psi(a)} & 
\kZP \ar[r]^-{\pi} \ar[d]^-{\psi(a)}& 
\kA \ar[r]\ar[d]^-{a} & 0
\\
0 \ar[r] & \Z \ar[r] & \Q \ar[r] & \Q/\Z \ar[r] & 0
}
$$
It follows that $\langle \ko{e(p)},\psi(a)\rangle = (a\pi)(\ko{e(p)})$ in $\Q/\Z$, i.e.\ we can choose $\wt{E}_a=\psi(a)$. Furthermore, the elements $f_{a,b}$ which
provide the multiplicative structure of $\cA$ can be defined as
$f_{a,b} = f_{\psi(a)+\psi(b)-\psi(a+b)}.$
Thus, using the relation 
$ \cO_{\PP^1}(D+E) \supseteq \cO_{\PP^1}(D)\otimes\cO_{\PP^1}(E)$ 
for the $\Q$-divisors $D$ and $E$, we obtain 
$$
\CoxSheaf(X)
\supseteq
\Big(\oplus_{w\in\tM}\cO_{\PP^1}(\pDiv(w))\Big) \otimes
\Big(\oplus_{a\in\kAA}\cO_{\PP^1}(\wt{E}_a)\Big)
= \cO_{\PP^1}(\pDiv)\otimes q_*\cO_C.
$$
Both sides are $\tM$-graded, and for saturated $w$
(meaning that $\min\langle  \wt{\Delta}_p, w\rangle\in\Z$),
the left hand summand $D=\pDiv(w)$ becomes an ordinary $\Z$-divisor. In this case, the inclusion actually becomes an equality.
\\[1ex]
For any $\Q$-divisor $D(=\pDiv(w))$ we know that
$\rounddown{q^*D} \geq  q^*\rounddown{D}$, hence
$$
q^*\cO_{\PP^1}(D)=q^*\cO_{\PP^1}(\rounddown{D})
=\cO_{\PP^1}(q^*\rounddown{D})\subseteq
\cO_{\PP^1}(\rounddown{q^*D})=\cO_{\PP^1}(q^*D),
$$
and therefore
$$
\cO_{\PP^1}(D)\otimes q_*\cO_C = 
q_*q^*\cO_{\PP^1}(D)\subseteq q_*\cO_{\PP^1}(q^*D).
$$
Combining these relations with the previous inclusion regarding the Cox ring, we finally arrive at
$$
\CoxSheaf(X)
\supseteq
\cO_{\PP^1}(\pDiv)\otimes q_*\cO_C
\subseteq
q_*\cO_{\PP^1}(q^*\pDiv)
$$
with equality for both sides for saturated degrees $w\in\tM$.
Since the two outer sheaves are normal we have that
$\CoxSheaf(X)=q_*\cO_{\PP^1}(q^*\pDiv)$, which then gives
$$
\Cox(X)=\kG(\PP^1,\CoxSheaf(X))=
\kG(\PP^1,q_*\cO_{\PP^1}(q^*\pDiv))
= \kG(C,\cO_C(q^*\pDiv)).
$$
This establishes $\cD_{\Cox} = q^*\cD$ and completes the proof.
\qed

\section{Examples}
\label{ex}

\subsection{Cox rings and degenerations} 
\label{subsec:coxDegenerations}

Using the degeneration techniques developed in \cite{phdNathan,defrat_tvar}, we construct a toric degeneration of $X(\fan) = \PP(\Omega_{\PP^2})$ from Example~\ref{ex-Omega} to the projective cone over the del Pezzo surface of degree six (see also \cite[Example 5.1]{candiv}) denoted by $X' = X(\fan')$. We have $\tail \fan' = \Sigma$, $Y' = \PP^1$ with the relevant slices pictured in Figure~\ref{fig:degenerationCotang}. Observe that the marking is the same as for $\pFan$, namely $\Sigma(1) \cup \Sigma(2)$.

\begin{figure}[htbp]
  \centering
  \subfigure[$\fan'_0$]{\degenerationZero}
  \subfigure[$\fan'_\infty$]{\tailFan}
  \caption{Fansy divisor of a toric degeneration of $\PP(\Omega_{\PP^2})$.}
  \label{fig:degenerationCotang}
\end{figure}

Since $X(\fan')$ is toric we know that its Cox ring is a polynomial ring. Applying our recipe, we can see from Figure~\ref{fig:degenerationCotang} that the compact part $\wt{\Delta}^c_{0}$ is a five-dimensional simplex. 

Yet, performing the analogous degeneration on the level of Cox rings, i.e.\ adding up all polyhedral coefficients of the polyhedral divisor described in Example~\ref{ex:coxDivisorCotang}, gives a (toric) $\CC$-algebra which is \emph{not} a polynomial ring. Observe that the compact part of the only non-trivial polyhedral coefficient is the Minkowski sum of three edges, i.e.\ a three-dimensional cube.

\subsection{Cox rings of log del Pezzo $\Cstar$-surfaces}
\label{subsec:coxSurfaces}
We conclude the paper with two examples taken from the classification list of Gorenstein log del Pezzo $\Cstar$-surfaces in \cite{candiv}.

\begin{example}
Let $X(\fan)$ be the Gorenstein del Pezzo $\Cstar$-surface of degree three with singularity type $E_6$. It has two elliptic fixed points, i.e.\ $\cR = \emptyset$. The marked fansy divisor $\fan$ is illustrated in Figure~\ref{fig:delPezzoE6}. 

\begin{figure}[htbp]
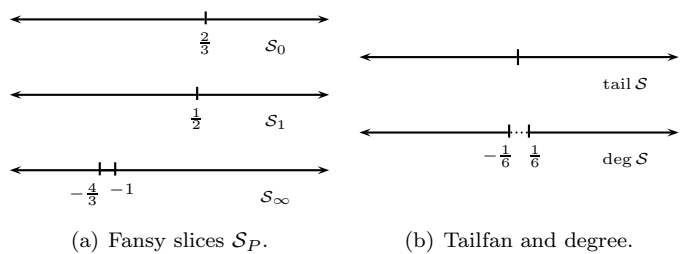

  \centering
  \subfigure[Fansy slices $\fan_P$.]{\dPfan}
  \subfigure[Tailfan and degree.]{\dPfanPlus}
  \caption{Fansy divisor of a Gorenstein log del Pezzo $\Cstar$-surface of singularity type $E_6$.}
  \label{fig:delPezzoE6}
\end{figure}

The divisor class group $\cl\big(X(\fan)\big)$ is torsion free of rank one, so $\kA = \kAA = 0$ and the covering map $q:\PP^1 \to \PP^1$ is the identity. Using the matrix
\[Q = \left( \begin{array}{rrrr} 3 & 0 & -3 & -1 \\ 0 & 2 & -3 & -1 \end{array}\right)\] 
and choosing a suitable section $s$, we obtain a tailcone $\sigma$ which is generated by the rays $(0,1)$ and $(-2,1)$. Furthermore,
\[\begin{array}{lll} \wt{\Delta}_0 & = & (0,-1/3) + \sigma, \\[1mm] \wt{\Delta}_1 & = & (0,1/2) + \sigma, \\[1mm] \wt{\Delta}_\infty & = & \ovl{\big((0,0),(-1/3,0)\big)} + \sigma, \end{array}\]
which eventually gives
\[\cD_{\Cox} = \wt{\Delta}_0 \otimes [0] + \wt{\Delta}_1 \otimes [1] + \wt{\Delta}_\infty \otimes [\infty]\,.\]
\end{example}

\begin{example}
Finally, consider the Gorenstein log del Pezzo $\Cstar$-surface of degree one with singularity type $E_6$ and $A_2$. It has two elliptic fixed points, so $\cR = \emptyset$ as before. The marked fansy divisor $\fan$ is pictured in Figure~\ref{fig:delPezzoE6A2}. 

\begin{figure}[htbp]
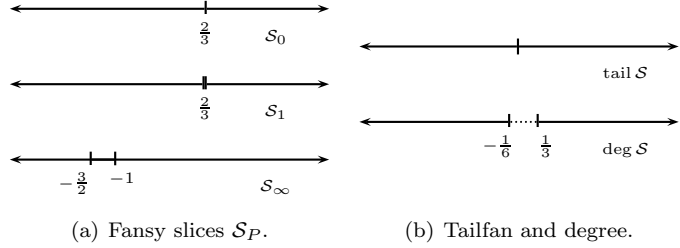

  \centering
  \subfigure[Fansy slices $\fan_P$.]{\dPtorsionFan}
  \subfigure[Tailfan and degree.]{\dPtorsionFanPlus}
  \caption{Fansy divisor of a Gorenstein log del Pezzo $\Cstar$-surface with singularity type $E_6$ and $A_2$.}
  \label{fig:delPezzoE6A2}
\end{figure}

The divisor class group $\cl\big(X(\fan)\big)$ is isomorphic to $\ZZ \oplus \ZZ/3\ZZ$. As above, using the matrix
\[Q = \left( \begin{array}{rrrr} 3 & 0 & -2 & -1 \\ 0 & 3 & -2 & -1 \end{array}\right)\] 
and a suitable section $s$, we derive our combinatorial data of the polyhedral divisor. Namely, the tailcone $\sigma$ is generated by the rays $(3,2)$ and $(0,1)$, whereas the polyhedral coefficients are given by
\[\begin{array}{lll} \wt{\Delta}_0 & = & \sigma, \\[1mm] \wt{\Delta}_1 & = & (0,1/3) + \sigma, \\[1mm]\wt{\Delta}_\infty & = & \ovl{\big((0,0),(1/2,0)\big)} + \sigma. \end{array}\]

Moreover, we also see that $A \cong \ZZ/3\ZZ$ gives rise to a 3:1-covering $q: \PP^1 \to \PP^1$ which is branched over $0$ and $1$. The polyhedral divisor describing the total coordinate space $\spec \Cox \big(X(\fan)\big)$ is thus given by
\[\cD_{\Cox} = 3 \cdot \wt{\Delta}_0 \otimes [c_0] + 3 \cdot \wt{\Delta}_1 \otimes [c_1] + \sum_{i=1}^3 \wt{\Delta}_\infty \otimes [c^i_\infty],\]
where $c^i_p \in q^{-1}(p)$ denotes a preimage of $p \in \PP^1$.
\end{example}

\bibliographystyle{alpha}
\bibliography{tcoxFinal}

\end{document}